\def\ie{{\em i.e.},~}
\newcommand{\hp}{\hat p}
\newcommand{\ld}{L^2}
\newcommand{\1}{{\bf 1}}
\newcommand{\E}{\mathbb E}
\newcommand{\G}{\mathbb G}
\newcommand{\R}{\mathbb R}
\newcommand{\N}{\mathbb N}
\newcommand{\cf}{\mathcal F}
\newcommand{\cg}{\mathcal G}
\newcommand{\cn}{\mathcal N}
\newcommand{\al}{\alpha}
\newcommand{\ep}{\varepsilon}
\newcommand{\ga}{\gamma}
\newcommand{\ka}{\kappa}
\newcommand{\la}{\lambda}
\newcommand{\om}{\omega}
\newcommand{\oom}{\Omega}
\newcommand{\si}{\sigma}
\newcommand{\lp}{\left(}
\newcommand{\rp}{\right)}
\newcommand{\lc}{\left[}
\newcommand{\rc}{\right]}
\newcommand{\lcl}{\left\{}
\newcommand{\rcl}{\right\}}
\newcommand{\lln}{\left|}
\newcommand{\rrn}{\right|}
\newcommand{\lla}{\left\langle}
\newcommand{\rra}{\right\rangle}
\newcommand{\bean}{\begin{eqnarray*}}
\newcommand{\eean}{\end{eqnarray*}}
\newcommand{\ben}{\begin{enumerate}}
\newcommand{\een}{\end{enumerate}}
\newcommand{\beq}{\begin{equation}}
\newcommand{\eeq}{\end{equation}}
\newtheorem{theorem}{Theorem}[section]
\newtheorem{hypothesis}[theorem]{Hypothesis}
\newtheorem{lemma}[theorem]{Lemma}
\newtheorem{proposition}[theorem]{Proposition}
\theoremstyle{remark}
\newtheorem{remark}[theorem]{Remark}
\begin{document}
\title[peeling wavelet denoising algorithm]{Convergence and performances of the peeling wavelet denoising algorithm}
\author[C. Lacaux\and A. Muller \and  R. Ranta \and S. Tindel]{C\'eline Lacaux \and Aur\'elie Muller \and Radu Ranta \and Samy Tindel}
\date{\today}

\address{
{\it Céline Lacaux, Aurélie Muller, Samy Tindel:} {\rm Institut {\'E}lie Cartan Nancy, B.P. 239,
54506 Vand{\oe}uvre-l{\`e}s-Nancy, France}. {\it Email:} {\tt
[lacaux, muller, tindel]@iecn.u-nancy.fr}
\newline
$\mbox{ }$\hspace{0.1cm}
{\it Radu Ranta:} {\rm Centre de Recherche en Automatique de Nancy, 2 Avenue de la Forêt de Haye, 54500 Vand{\oe}uvre-l{\`e}s-Nancy, France}. {\it Email:} {\tt
Radu.Ranta@ensem.inpl-nancy.fr}
 }

 \keywords{Wavelets, denoising, peeling algorithm, empirical processes, generalized Gaussian distribution}

 \subjclass[2000]{62G08,62G20}

\date{\today}

\begin{abstract}
This note is devoted to an analysis of the so-called peeling algorithm in wavelet denoising.
Assuming that the wavelet coefficients of the signal can be modeled by generalized Gaussian random
variables, we compute a critical thresholding constant for the algorithm, which depends on the
shape parameter of the generalized Gaussian distribution. We also quantify the optimal number of
steps which have to be performed, and analyze the convergence of the algorithm. Several versions
of the obtained algorithm were implemented and tested against classical wavelet denoising
procedures on benchmark and simulated biological signals.
\end{abstract}

\maketitle

\section{Introduction}\label{Intro}

Among the wide range of applications of wavelet theory which have emerged during the last 20
years, the processing of noisy signals is certainly one of the most important one. Especially
attractive to the community has been the \emph{thresholding} algorithm, and the great amount of
efforts in this direction is well represented by the enthusiastic discussion in \cite{DJKP}, by
the application oriented presentation \cite{ABS} or by the sharp uniform central limit theorems in \cite{GN}. This fundamental algorithm can be summarized in
the following way: recall that the wavelet decomposition of a function $z\in L^2(\R)$ is usually
written as: \beq\label{eq:dcp-g-wave} z(t)=\sum_{k=0}^{2^{j_0}-1}\al_{j_0 k}\phi_{j_0 k}(t) +
\sum_{j=j_0}^{\infty}\sum_{k=0}^{2^{j}-1}\beta_{j k}\psi_{j k}(t), \eeq where the coefficients
$\al,\beta$ are obtained by projection in $\ld(\R)$:
$$
\al_{j_0 k}=\lla \phi_{j_0 k}, z \rra_{\ld(\R)},
\quad\mbox{and}\quad
\beta_{j k}=\lla \psi_{j k}, z \rra_{\ld(\R)}.
$$
The functions $\psi$ and $\phi$ are respectively called mother and father wavelets, and enjoy some
suitable scaling and algebraic properties (see e.g. \cite{Da,Mal} for a complete account on
wavelet decompositions). In this context, the thresholding algorithm assumes that, if $z$ can be
decomposed into $z=x+n$, where $x$ is the useful signal and $n$ its noisy part, then the wavelet
coefficients corresponding to $n$ will typically be very small. A reasonable estimation for the
signal $x$ is thus:
$$
\hat x (t)=\sum_{k=0}^{2^{j_0}-1}\al_{j_0 k}\, \1_{\{|\al_{j_0 k}|\ge \tau\}}\, \phi_{j_0 k}(t)
+ \sum_{j=j_0}^{J}\sum_{k=0}^{2^{j}-1}
\beta_{j k}\, \1_{\{|\beta_{j k}|\ge \tau\}}\, \psi_{j k}(t),
$$
where $\tau$ is a suitable threshold (which may also depend on the resolution $j$ and, in
practice, is often null for the coefficients $\al_{j_0}$ of the father wavelet / scale function)
and where $J$ corresponds to the maximal resolution one is allowed to consider. It is then proved
in the aforementioned references \cite{ABS,DJKP} that this kind of estimator satisfies some nice
properties concerning the asymptotic behavior of the approximation error, in terms of the total
number of wavelet coefficients (which is denoted by $N$ in the sequel).

\smallskip

One of the drawbacks of the thresholding algorithm is that it may also spoil the original signal
$x$. The critical issue is the value of the threshold(s) $\tau$: too low it is inefficient, too
high it distorts the information from $x$. In order to improve the performances of wavelet-based
denoising algorithms by adapting them to the processed signals, the following iterative method,
called \emph{peeling} algorithm, has been introduced and shown to be particularly useful for
biomedical applications in \cite{CW,HP}.
It still relies on an a priori decomposition of the observed signal $z$ into $z=x+n$, where $x$ is
the signal itself, and $n$  is a noise. The algorithm intends then to separate $x$ from $n$
iteratively, and the $k\textsuperscript{th}$ step of the procedure produces an estimated signal
$x_k$, and a noise $n_k$, initialized for $k=0$ as $n_0=z$. These functions will always be
assimilated with the vector of their wavelet coefficients. Then the  $(k+1)\textsuperscript{th}$
step is as follows:
\begin{enumerate}
\item Compute $\si_k^2=\frac{\| n_k\|^2}{N}$, where we recall that $N$ denotes the total
    number of wavelet coefficients involved in the analysis.
\item Set a thresholding level $T_{k+1}$ as $T_{k+1}=h(\si_k)$, where $h$ is usually linear,
    which means that $T_{k+1}= F \, \si_k$ for a certain coefficient $F$.
\item Compute $\Delta x_{k+1}$ as:
    $$
    \Delta x_{k+1}(q)=n_{k}(q) \, \1_{\{|n_{k}(q)|\ge T_{k+1}\}},
    $$
    for all the coefficients $q$ of the wavelet decomposition. The vectors $x_{k+1}, n_{k+1}$
    are then defined as $x_{k+1}=x_{k}+\Delta x_{k+1}$, and $n_{k+1}=n_{k}-\Delta x_{k+1}$.
\item Loop this procedure until a stop criterion of the form $\| n_k\|^2-\| n_{k+1}\|^2 \leq
    \ep$ is reached, for a certain positive constant $\ep$. Notice that one can choose
    $\ep=0$.
\end{enumerate}
This iterative procedure tends to retrieve a higher quantity of (approximate) signal $x$ from the
noisy input $z$, correcting some of the failures of the original thresholding algorithm in some
special situations.

\smallskip

On the basis of these promising experimental results, the peeling algorithm has been further
investigated in \cite{RHLW,RHLW2}, and it has been first observed in those references that  the
peeling problem could be handled through a fixed point algorithm. This possibility stems basically
from the fact that the sequence $\{T_k;\, k\ge 0 \}$ is decreasing (as $\| n_k\|^2\geq\|
n_{k+1}\|^2$), which means that the previous algorithm can be reduced to the following:
\begin{enumerate}
\item Set $T_0=+\infty$ and $T_{k+1}=f_N(T_k)$, where $f_N$ is of the form: \beq\label{eq:2a}
    f_N(x)=F \ \lc \frac{\sum_{q\le N}z^2(q)\, \1_{\{|z(q)|< x\}}}{N}  \rc^{1/2}. \eeq For a
    suitable constant $F$, this defines a converging decreasing sequence $(T_k)$, such that
    $\lim_{k\to\infty}T_k=T_f>0$.
\item
Stop the loop when $T_{k+1}=T_f$, and then set
\beq\label{eq:2}
\hat x(q)=z(q) \, \1_{\{|z(q)|\ge T_{f}\}}.
\eeq
\end{enumerate}
It is shown in \cite{RHLW} that this algorithm is almost surely convergent, and a further analysis
of the coefficient $F$ is performed in \cite{RHLW2}.

\smallskip

However, in spite of the efforts made in the aforementioned references \cite{RHLW,RHLW2}, a
probabilistic analysis of the algorithm is still missing. The current article proposes to make a
step in this direction, and we proceed now to describe the results we have obtained. First of all,
let us say a few words about the model we have chosen for our signal $z$. This signal is of course
characterized by the family of its wavelets coefficients, which will be denoted from now on by
$\{z(q);\, q\le N\}$, and it is usual in signal processing to model these coefficients by
independent generalized Gaussian variables (see e.g. \cite{PZP}), all defined on a common complete
probability space $(\oom,\cf,P)$. To be more specific, we will assume the following:
\begin{hypothesis}\label{hyp:1.1}
The  wavelet coefficients $\{z(q);\, q\le N\}$ of our signal $z$ form an i.i.d family of
generalized Gaussian variables, whose common density $(p_{\si,u}(x))_{x\in\R}$ is given by
\beq\label{eq:gg-density}
 p_{\si,u}(x)=\alpha e^{-|\beta x|^u}, \quad\mbox{with}\quad
 \beta=\frac{1}{\si} \lp\frac{\Gamma(3/u)}{\Gamma(1/u)}\rp^{1/2}, \quad \alpha =\frac{\beta u}{2\Gamma(1/u)},
 \eeq
 where $\Gamma$ stands for the usual Gamma function $\Gamma(\xi)=\int_0^\infty e^{-x}x^{\xi-1}dx$. Notice that the coefficient $\si>0$ above is the standard deviation of each random variable $z(q)$, and that $u>0$ represents the shape parameter of the probability law ($u=2$ for the Gaussian, $u=1$ for the Laplace pdf).
\end{hypothesis}
It should be stressed at this point that this model does not take into account the possible
decomposition of $z$ into a signal plus a noise, since we model directly the wavelet coefficients
of $z$. It is however suitable for the main example we have in mind, namely a situation where the
family $\{z(q);\, q\le N\}$ is sparse. Indeed, when $u<2$ in expression~ (\ref{eq:gg-density}),
the distribution of the $z(q)$'s becomes heavy tailed, which means that one expects a few large
coefficients and many small ones. This is the situation we are mostly interested in, but our
analysis below is valid for any coefficient $u>0$ once our basic model is assumed to be realistic.
It should also be stressed that in the end, our algorithm is also of thresholding type, as may be
seen from equation (\ref{eq:2}). This means in particular that it is certainly suitable to
retrieve signal from a noisy input, on the same basis as the original thresholding algorithm.

\smallskip

With these preliminary considerations in mind, here are the two main results which will be
presented in this paper:

\smallskip

\noindent \textbf{(1)} We have seen that the sequence of thresholds $\{T_k;\, k\ge 0\}$ involved
in the peeling algorithm converges almost surely. However, it is easily checked that it can
converge either to a strictly positive quantity $T_f$, either to 0. This latter limit is not
suitable for our purposes, since it means that no noise will be extracted from our signal. One of
the main questions raised by the peeling algorithm is thus to find an appropriate constant $F$ in
(\ref{eq:2a}) such that \textit{(i)} The algorithm yields a convergence to a non trivial threshold
$T_f>0$. \textit{(ii)} $F$ is small enough, so that a sufficient part of the original signal is
retrieved.

\smallskip

The previous attempts in this direction were simply (see \cite{HP}) to take $F=3\si$ with
experimental arguments; after the analysis performed in \cite{RHLW2}, this quantity was reduced to
$F=F_m$, a quantity which is defined by
\beq \label{eq:Fm}%
F_m=\sqrt{\frac{3\Gamma(1/u)}{u}(ue)^{1/u}}. \eeq However, the latter bound has been obtained
thanks to some rough estimates, and we have thus decided here to go one step further into this
direction. Indeed, our first task will be to determine precisely, and on a mathematical ground, a
constant $F_c=F_c(u,\si)$ such that: if $F>F_c$, the algorithm yields a convergence, with high
probability, to a strictly positive constant $T_f=T_f(\omega)$, whose fluctuations around a typical non-random value $x^*$ will be determined. In particular, we will see that our constant $F_c$ is always
lower than $F_m$. Whenever $F<F_c$, we also show that $T_k$ converges to 0 with high probability
(see Proposition~\ref{Asy:F<Fc}).

\smallskip

\noindent \textbf{(2)} In the regime $F>F_c$, we determine that the optimal number of steps for
the peeling algorithm is of order $\log(N)$, where we recall that $N$ is the total number of
wavelet coefficients involved in the analysis. After this optimal number of steps, Theorem
\ref{thm:cvgce-noisy-dyn} quantifies also sharply the oscillations of $T_f$ with respect to its
theoretical value $m_f$.

\smallskip

It is important to show that our theoretical results can really be applied to real data. We have
thus decided first to compare the performances of our algorithm with other wavelet denoising
procedures, on some classical benchmark signals proposed in \cite{DJ94}. It will be seen that our
algorithm performs well with respect to other methods, independently of the value of the shape
parameter in (\ref{eq:gg-density}) and of the form of the benchmark signal. Interestingly enough,
this assertion is true even if Hypothesis \ref{hyp:1.1} is not always satisfied by the benchmark
signals under consideration.

\smallskip

A second step in our practical part of the study is the following: since the peeling algorithm has
been introduced first in a medical context, we give an illustration of its performances on ECG
type signals. More specifically, we shall consider a simulated ECG signal, and observe the
denoising effect of our algorithm on a perturbed version of those electrocardiograms. It will be
observed again that the algorithm under analysis is a good compromise between denoising and
preservation of the original signal.

\smallskip

Let us mention some open problems that have been left for a subsequent publication: first, let us
recall that the so-called block thresholding has improved the behavior of the original
thresholding algorithm in a certain number of situations (see e.g. \cite{Ch} for a nice overview).
It would be interesting to analyze the effect of this procedure in our peeling context. In
relation to this problem, one should also care about some reasonable dependence structure among
wavelet coefficients, beyond the independent case treated in this article. Finally, we have
assumed in this paper that the parameters of the distribution $p_{\si,u}$ were known, which is
typically not true in real world applications. One should thus be able to quantify the effect of
parameter estimation on the whole denoising process.

\smallskip

Here is how our article is structured: we show how to compute optimal constants for the peeling
algorithm at Section \ref{sec:critic-constants}. Then the probabilistic analysis of the algorithm
is leaded at Section \ref{sec:proba-analysis}. Finally, some numerical experiment on simulated and
pseudo-real data are performed at Section 4.

\section{Critical constants for the peeling algorithm}\label{sec:critic-constants}
This section is devoted to the computation of an optimal constant $F$ in equation (\ref{eq:2a}),
ensuring a convergence of the threshold $T_k$ to a non trivial $T_f$, and still allowing to
retrieve a maximal amount of approximate signal from our noisy input $z$.

\smallskip

Let us start this procedure by changing slightly the setting of the peeling algorithm. Indeed, it
will be essential for our convergence theorems at Section \ref{sec:proba-analysis}, to be able to
express the fixed point algorithm in terms of  empirical processes. To this purpose, we resort to
a simple change of variables by setting:
$$
U_k=T_k^2, \quad\mbox{and}\quad Y(q)=z(q)^2.
$$
Note that $U_0=+\infty$. It is then readily checked that the fixed point algorithm of
Section~\ref{Intro} is equivalent to the following:
\begin{enumerate}
\item
$U_{k+1}=g_N(U_k)$, where $g_N$ is of the form:
$$
g_N(x)=\frac{F^2}{N}\sum_{q\le N}Y(q)\, \1_{\{Y(q)< x\}} .
$$
For a suitable constant $F$, this defines a converging decreasing sequence $(U_k)$, such
that $\lim_{k\to\infty}U_k=U_f$.
\item
Stop the loop when $U_{k+1}=U_f$, and then set
$$
\hat x(q)=z(q) \, \1_{\{|z(q)|\ge \sqrt{U_{f}}\}}.
$$
\end{enumerate}
The fixed point $U_f=U_f(\omega)$ is then solution of the equation $g_N(x)=x$. We wish to find the
critical (minimal) $F$ which ensures $U_f$ to be strictly positive.

\smallskip

A preliminary step towards this aim is to consider a natural deterministic problem related to the
equation $g_N(x)=x$. Indeed, for a fixed value of $x\in\R_+$, the law of large numbers asserts
that the random variable $g_N(x)$ converges almost surely to the quantity
$$
g_{\si,u}(x)= F^2\int_0^x w \hat{p}_{\si,u}(w) \, dw,
$$
where $\hp_{\si,u}$ is the common density of the random variables $Y(q)=z(q)^2$, given explicitly
under Hypothesis \ref{hyp:1.1} by \beq\label{eq:def-hap-p}
\hp_{\si,u}(w)=\frac{p_{\si,u}(\sqrt{w})}{\sqrt{w}}\1_{\{w>0\}} =\alpha \frac{1}{\sqrt
w}e^{-(\beta \sqrt{w})^u}\1_{\{w>0\}}. \eeq This result is only a simple convergence result, and not
an almost sure uniform convergence of $g_N$ towards $g_{\si,u}$. However, as will be shown in the
Section \ref{sec:proba-analysis}, the fixed point $U_f$ is close in some sense to a fixed point of
$g_{\si,u}$. Therefore, our study of the fixed points of $g_N$ can be reduced to the study of
$g_{\si,u}$. Our aim is now to give some sharp conditions on the coefficient $F$ ensuring that the
equation $g_{\si,u}(x)=x$ has at least one solution $x>0$.

\smallskip

According to  (\ref{eq:def-hap-p}) we obtain, for $x>0$:
$$g_{\si,u}(x)= F^2\alpha \int_0^x\sqrt{w}e^{-(\beta \sqrt w)^u}dw,$$
with  $\beta=\frac{1}{\si} \lp\frac{\Gamma(3/u)}{\Gamma(1/u)}\rp^{1/2}$ and $\alpha =\frac{\beta u}{2\Gamma(1/u)}.$
Furthermore, a simple change of variables argument yields:%
\beq\label{eq:g-sigma-u}
g_{\si,u}(x)=F^2\sigma^2\Gamma_{{\rm inc}}((\beta\sqrt x)^u,3/u)%
\eeq
where $\Gamma_{{\rm inc}}(x,a)=\frac{1}{\Gamma(a)}\int_{0}^x e^{-t}t^{a-1}dt$ is the incomplete
Gamma function.

Observe that the trivial change of variables $w=\si^2 y$ in the integral above yields the
expression: \beq\label{eq:relation-f-sigma-b} g_{\si,u}(x)= \,\si^2g_{1,u}(x/\si^2) \eeq Hence,
solving $g_{\si,u}(x)=x$ is equivalent to solve $g_{1,u}(v)=v$, for $v=x/\si^2$. We shall consider
our equation in this reduced form, since $\sigma$ has only a scale role in the fixed point problem
of $g_{\si,u}$ and can be omitted in the study. In the sequel, we thus solve the problem in its
reduced form: $g_{1,u}(x)=x$. Furthermore, for notational sake, we simply denote $g_{1,u}$ by $g$.

\smallskip

The resolution of the equation $g(x)=x$ boils down to the joint study of $g$ and of a function $d$
defined by $d(x)=g(x)-x$. These studies are a matter of elementary considerations, and it is
easily deduced that $g$ has the following form, as a function from $\R_+$ to $\R_+$:
\ben %
\item
$g$ is increasing and $\lim_{x\to\infty}g(x)=F^2 \triangleq g_\infty$.
\item $g$ is convex on $[0,\beta^{-2}u^{-2/u}]$ and concave on $[\beta^{-2}u^{-2/u},+\infty)$,
    where  $\beta=\left(\frac{\Gamma(3/u)}{\Gamma(1/u)}\right)^{1/2}$.
\een %
More precisely, it is easy to prove the existence of a critical value $F_c$ such that:
\begin{enumerate}
\item If $F<F_c$, the only fixed point of $g$ is 0.
\item If $F=F_c$, $g$ has exactly two fixed points (0 and $x^*_c>\beta^{-2}u^{-2/u}$).
\item If $F>F_c$, $g$ has exactly three fixed points (0, $l_1$, and $x^*$), such that
    $0<l_1<x^*$ and $\beta^{-2}u^{-2/u}<x^*$.
\end{enumerate}
These facts are well illustrated by Figure 1 (for $\si=1$).
\begin{figure}[htbp]
\begin{center}
	\includegraphics[scale=0.6]{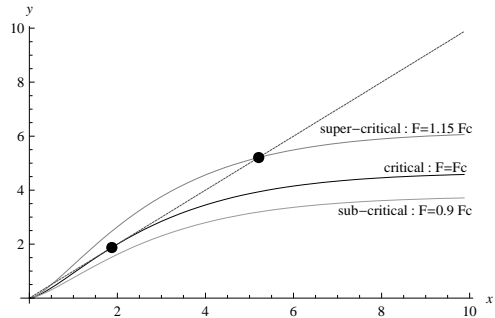}
\caption{Curves corresponding to $g$, in the critical case ($F=F_c$), and in supercritical and subcritical cases ($F=1.15 F_c$ and $F=0.9 F_c$), for $\si=1$ and $u=2$.}
\end{center}
\label{pointfixe}
\end{figure}

\smallskip

Let us turn now to the computation of the critical coefficient $F_c$ and the critical fixed point
$x^*_c$. In fact, once the study of our function $d$ is performed, it is also easy to show that
$F\equiv F_c$ and $r\equiv x^*_c$ are solutions of the system:
$$
g'(r)=1, \quad\mbox{and}\quad g(r)=r,
$$
where we recall that the coefficient $F$ enters into the definition of $g$. This system is
equivalent, in the generalized Gaussian case, to:
$$
\begin{cases}
F^2\alpha\sqrt r e^{-(\beta\sqrt r)^u}-1&=0\\
F^2\Gamma_{\rm inc}((\beta\sqrt r)^u,3/u) -r&=0,
\end{cases}
$$
where it should be reminded that $\Gamma$ and $\Gamma_{\rm inc}$ designate respectively Gamma and
incomplete Gamma functions. The latter system can be solved with the Mathematica software, and the
solutions for different $u$ are illustrated in Figure \ref{fig2}.
\begin{figure}[htbp]
\begin{center}
		\includegraphics[scale=0.6]{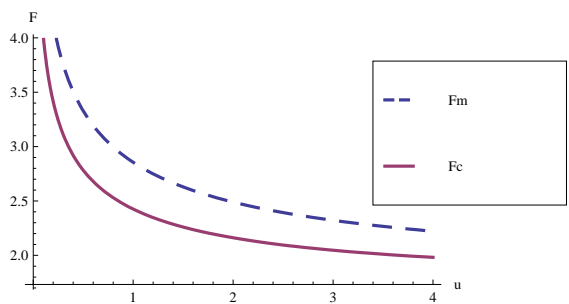}
\caption{The values of critical $F_c$ and $F_m$ for $\si=1$ and $u\in[0,4]$.}
\label{fig2}
\end{center}
\end{figure}
Some typical values of $F_c$ in terms of $u$ are also given in Table \ref{tab:u-Fc}.
\begin{table}
\begin{tabular}{l | c c c c c c}
$u$&0.1&0.5&1&2&3&4 \\ \hline
$F_c$& 4.0215& 2.7830&2.42537&2.16169&2.0472&1.98181
\end{tabular}\caption{Critical constant $F_c$ for different shapes $u$.\label{tab:u-Fc}}
\end{table}
In particular, it can be observed that $F_c$ is smaller than the bound $F_m$ proposed by
\cite{RHLW2}, which has been recalled at equation (\ref{eq:Fm}).

\section{Probabilistic analysis of the algorithm}\label{sec:proba-analysis}

\subsection{Comparison Noisy dynamics/ Deterministic dynamics}
The exact dynamics governing the
sequence $\{ U_n;\, n\ge 0 \}$ is of the form $U_{n+1}=g_N(U_n)$. In order to compare
this with the deterministic dynamics, let us recast this relation into:
$$
U_{n+1}=g(U_n) +\ep_{n,N},
\quad\mbox{where}\quad
\ep_{n,N}=g_N(U_n)-g(U_n).
$$
Notice that the errors $\ep_{n,N}$ are far from being independent, which means that
the relation above does not define a Markov chain. However, a fairly simple expression
is available for~$U_n$:
\begin{proposition}\label{prop:expression-noisy-dynamics}
For $n\ge 0$, set $g^{\circ n}$ for the $n$\textsuperscript{th} iteration of $g$. Then, for $n\ge 0$,
we have:
$$
U_{n}= g^{\circ n}(U_0)+ R_{n},
\quad\mbox{with}\quad
R_{n}=\sum_{p=0}^{n-1}\ep_{p,N} \prod_{q=2}^{n-p} g'(C_{p+q}),
$$
where the random variable $C_{j}$ ($j\ge 2$) is a certain real number within the interval
$[g^{\circ (j-1)}(U_0);$  $U_{j-1}]$. In the definition of $R_{n}$, we have also
used the conventions $\prod_{q=2}^{1}a_q=1$ and $R_0=0$.
\end{proposition}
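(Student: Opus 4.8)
The plan is to argue by induction on $n$, using the mean value theorem once per step to split off the newest error contribution and then reorganising the telescoping product of derivatives. The base case $n=0$ is immediate: $g^{\circ 0}(U_0)=U_0$ and $R_0=0$ by convention, so $U_0=g^{\circ 0}(U_0)+R_0$ holds trivially. (Since $U_0=+\infty$, I read $g$ throughout as its continuous extension to $+\infty$, with $g(+\infty)=F^2=g_\infty$; then $g^{\circ n}(U_0)$ is finite for $n\ge 1$, and $U_1=g_N(U_0)=\frac{F^2}{N}\sum_{q\le N}Y(q)$ is finite as well, so nothing singular happens at the first step — and note that the value $C_1$ never enters, since the products defining $R_n$ involve only $C_j$ with $j\ge 2$.)

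For the inductive step, suppose $U_n=g^{\circ n}(U_0)+R_n$. Starting from the recursion $U_{n+1}=g(U_n)+\ep_{n,N}$, I would write
\[
U_{n+1}=g\big(g^{\circ n}(U_0)+R_n\big)+\ep_{n,N}
=g^{\circ (n+1)}(U_0)+\Big[\,g\big(g^{\circ n}(U_0)+R_n\big)-g\big(g^{\circ n}(U_0)\big)\,\Big]+\ep_{n,N}.
\]
Because $g$ is $C^1$ on $\R_+$ — clear from the explicit expression (\ref{eq:g-sigma-u}) — the mean value theorem produces a real number $C_{n+1}$ lying between $g^{\circ n}(U_0)$ and $g^{\circ n}(U_0)+R_n=U_n$, i.e. $C_{n+1}\in[g^{\circ n}(U_0);\,U_n]$, which is exactly the asserted range of $C_j$ for $j=n+1$, such that the bracketed difference equals $g'(C_{n+1})\,R_n$. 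Hence
\[
U_{n+1}=g^{\circ (n+1)}(U_0)+g'(C_{n+1})\,R_n+\ep_{n,N},
\]
and it only remains to identify $g'(C_{n+1})\,R_n+\ep_{n,N}$ with $R_{n+1}$.

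That identification is pure bookkeeping. Substituting the inductive form of $R_n$,
\[
g'(C_{n+1})\,R_n+\ep_{n,N}
=\sum_{p=0}^{n-1}\ep_{p,N}\; g'(C_{n+1})\prod_{q=2}^{n-p}g'(C_{p+q})\;+\;\ep_{n,N}.
\]
For each $p\le n-1$ I would write $n+1=p+(n+1-p)$, so that $g'(C_{n+1})=g'(C_{p+(n+1-p)})$ is precisely the factor needed to upgrade $\prod_{q=2}^{n-p}$ into $\prod_{q=2}^{n+1-p}$; and the leftover $\ep_{n,N}$ is exactly the $p=n$ summand $\ep_{n,N}\prod_{q=2}^{1}g'(C_{p+q})=\ep_{n,N}$, by the convention $\prod_{q=2}^{1}a_q=1$. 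Collecting terms gives $\sum_{p=0}^{n}\ep_{p,N}\prod_{q=2}^{n+1-p}g'(C_{p+q})=R_{n+1}$, closing the induction. There is no genuine obstacle here: the only points requiring care are the index arithmetic in the telescoping product (making sure $g'(C_{n+1})$ attaches as the \emph{last} factor of each partial product and that the empty-product convention absorbs the fresh error $\ep_{n,N}$) and the mildly singular status of $U_0=+\infty$ at the initial step, handled by continuity of $g$ at infinity; differentiability of $g$, which is what makes the mean value theorem applicable on $[g^{\circ n}(U_0);U_n]$, follows from the smoothness of the incomplete Gamma function in (\ref{eq:g-sigma-u}).
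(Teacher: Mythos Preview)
Your proof is correct and follows essentially the same approach as the paper: both establish the recursion $R_{n+1}=g'(C_{n+1})R_n+\ep_{n,N}$ and then unwind it into the stated sum. Your version is in fact more complete, since you make explicit the mean value theorem step that produces $C_{n+1}\in[g^{\circ n}(U_0);U_n]$ and you address the $U_0=+\infty$ issue, both of which the paper leaves implicit.
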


\begin{proof}
It is easily seen inductively that $R_0=0$, $R_{1}=\ep_{0,N}$ and for $n\ge 1$
$$
R_{n+1}=g'(C_{n+1})R_{n}+\ep_{n,N}.
$$
Hence, by a backward induction, we obtain:
$$
R_{n}=\sum_{j=1}^{n}\ep_{n-j,N} \prod_{l=0}^{j-2} g'(C_{n-l})
=\sum_{p=0}^{n-1}\ep_{p,N} \prod_{q=2}^{n-p} g'(C_{p+q}),
$$
which ends the proof.
\end{proof}

A useful property of the errors $\ep_{p,N}$ is that they concentrate exponentially fast (in terms
of $N$) around 0. This can be quantified in the following:
\begin{lemma}\label{Samy}
Assume that the wavelets coefficients are distributed according to a generalized Gaussian random variable with parameter $u>0$, whose density is given by (\ref{eq:gg-density}), and recall that $F$ is defined by equation (\ref{eq:2a}).  Then for every  $0<\gamma<(\beta/F)^u$, there exists  a  finite positive constant $K>0$ such that for all $N\ge 1$ and  for all $\la\in [0,\ga N^{u/4}]$, 
\beq\label{eq:exp-bnd-epsilon}
\mathbb{E}\left[ \textup{e}^{\la  | \ep_{p,N}|^{u/2} }\right] \le K.
\eeq
Moreover, for all $N\ge 1$, for all $p\ge 0$ and $l>0$,
\beq\label{eq:exp-bnd-proba-epsilon}
\mathbb{P}\lp \lln \ep_{p,N}\rrn \ge l\rp \le K^{}\textup{e}^{-\ga l^{u/2} N^{u/4}}.
\eeq
\end{lemma}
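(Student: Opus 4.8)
The plan is to control $\ep_{p,N}=g_N(U_p)-g(U_p)$ uniformly in the random argument $U_p$ by passing to a genuine empirical process. Write $G_N(x)=\frac{1}{N}\sum_{q\le N}Y(q)\1_{\{Y(q)<x\}}$ and $G(x)=\int_0^x w\hp_{\si,u}(w)\,dw$, so that $\ep_{p,N}=F^2\bigl(G_N(U_p)-G(U_p)\bigr)$ and hence $|\ep_{p,N}|\le F^2\sup_{x\ge 0}|G_N(x)-G(x)|$. The key point is therefore the reduction to the supremum norm of the empirical process $x\mapsto G_N(x)-G(x)$, which no longer involves the random threshold; once this is done, proving the two displays becomes a matter of controlling the tail of that supremum. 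Since the functions $y\mapsto y\1_{\{y<x\}}$ form a (uniformly bounded in $L^1$ but not bounded) monotone family, I would split off the truncation: for a level $M>0$, write $y\1_{\{y<x\}}= y\1_{\{y<x\wedge M\}} + y\1_{\{M\le y<x\}}$, bound the contribution of the second piece by $\frac1N\sum_q Y(q)\1_{\{Y(q)\ge M\}}$ plus its mean (both small when $M$ is large, because $Y(q)=z(q)^2$ has a stretched-exponential tail $\propto e^{-\beta^u y^{u/2}}$ coming from \eqref{eq:gg-density}), and handle the bounded piece by a classical empirical-process / bracketing argument on the class $\{y\mapsto y\1_{\{y<x\wedge M\}}:x\ge 0\}$, whose envelope is $M$.

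Concretely, for the first display \eqref{eq:exp-bnd-epsilon} I would first record the moment generating bound for a single summand: since $Y(q)=z(q)^2$, one has $\E[e^{t|Y(q)|^{u/2}}]=\E[e^{t|\beta^{-1}z(q)|^{u}\beta^u\cdot\beta^{-u}}]<\infty$ for $t<\beta^u$, so $|Y(q)|^{u/2}$ has a finite exponential moment of parameter strictly less than $\beta^u$; since $|\ep_{p,N}|^{u/2}\le (F^2)^{u/2}\bigl(\sup_x|G_N(x)-G(x)|\bigr)^{u/2}$ and $\sup_x|G_N(x)-G(x)|\le \frac1N\sum_q Y(q)+\int_0^\infty w\hp\,dw = \frac1N\sum_q Y(q)+\si^2$, a crude bound gives $|\ep_{p,N}|^{u/2}\le C\bigl(1+\frac1N\sum_q Y(q)\bigr)^{u/2}\le C'\bigl(1+(\frac1N\sum_q Y(q))^{u/2}\bigr)$ (using $u/2\le 1$ if $u\le2$; for $u>2$ one uses instead a direct bound since the envelope is then genuinely bounded after truncation and the stretched exponential is super-Gaussian). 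Then $\E[e^{\la|\ep_{p,N}|^{u/2}}]\le e^{C'\la}\,\E\bigl[e^{C'\la(\frac1N\sum_q Y(q))^{u/2}}\bigr]$, and I would estimate the last expectation by Jensen (when $u\le 2$, $(\frac1N\sum Y(q))^{u/2}\le\frac1N\sum Y(q)^{u/2}$) followed by independence, giving $\bigl(\E[e^{C'\la Y(1)^{u/2}/N^{?}}]\bigr)^{N}$; the constraint $\la\le\ga N^{u/4}$ together with $\ga<(\beta/F)^u$ is exactly what keeps the per-coordinate exponential parameter below $\beta^u$ after the $N$-normalization, so the product stays bounded by a constant $K$. (The exponent $N^{u/4}$ rather than $N$ reflects that it is $|\ep_{p,N}|^{u/2}$, not $|\ep_{p,N}|$, that is being exponentiated, and that $\ep_{p,N}$ is already an average, so one effectively gains a square-root-type scaling; I would track the precise powers to land on $N^{u/4}$.)

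For the second display \eqref{eq:exp-bnd-proba-epsilon}, I would simply apply Markov's inequality to $e^{\la|\ep_{p,N}|^{u/2}}$ with the largest admissible $\la=\ga N^{u/4}$: for $l>0$,
\[
\mathbb{P}\bigl(|\ep_{p,N}|\ge l\bigr)
=\mathbb{P}\bigl(e^{\la|\ep_{p,N}|^{u/2}}\ge e^{\la l^{u/2}}\bigr)
\le e^{-\la l^{u/2}}\,\E\bigl[e^{\la|\ep_{p,N}|^{u/2}}\bigr]
\le K\,e^{-\ga l^{u/2}N^{u/4}},
\]
which is precisely \eqref{eq:exp-bnd-proba-epsilon}. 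The uniformity in $p$ is automatic because $|\ep_{p,N}|\le F^2\sup_x|G_N(x)-G(x)|$ is a bound that does not depend on $p$ (it dominates every $\ep_{p,N}$ by the same empirical-process quantity, regardless of which random $U_p$ is plugged in). The main obstacle is the first step: turning the $p$- and $\omega$-dependent quantity $\ep_{p,N}$ into something controlled uniformly over the threshold, i.e.\ justifying $|\ep_{p,N}|\le F^2\sup_{x\ge0}|G_N(x)-G(x)|$ and then getting a stretched-exponential tail for that supremum of an unbounded empirical process with a heavy-ish ($u<2$) envelope — the truncation-at-level-$M$ trick and the choice of $\ga<(\beta/F)^u$ are the delicate points, and one must be careful that the constant $K$ can be chosen uniformly in $N\ge1$.
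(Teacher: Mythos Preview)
Your reduction $|\ep_{p,N}|\le F^2\sup_{x\ge 0}|G_N(x)-G(x)|$ and your derivation of \eqref{eq:exp-bnd-proba-epsilon} from \eqref{eq:exp-bnd-epsilon} via Markov are both correct and match the paper. The gap is in your argument for \eqref{eq:exp-bnd-epsilon}. The ``crude bound'' $\sup_x|G_N(x)-G(x)|\le \frac1N\sum_q Y(q)+\si^2$ throws away the centering: the right-hand side converges to $2\si^2$, not to $0$, so when you exponentiate $\la|\ep_{p,N}|^{u/2}$ with $\la=\ga N^{u/4}$ you pick up a factor $e^{C'\la}$ (from the additive constant) that blows up with $N$. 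Even the product term $\prod_q\E[e^{C'\la Y(q)^{u/2}/N}]$ behaves like $e^{C'\la\,\E[Y^{u/2}]}$ for small per-coordinate parameter, which again diverges. In short, your route never recovers the $N^{-1/2}$ scale of the centered empirical average, and that scale is precisely what makes the range $\la\le\ga N^{u/4}$ work. The truncation-at-level-$M$ idea you sketch earlier is in the right spirit but is not carried through, and making it yield a constant $K$ uniform in $N$ would still require a genuine concentration estimate for the bounded piece.

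The paper avoids all of this by writing $\ep_{p,N}=N^{-1/2}\G_N h_{U_p}$ with $\G_N f=N^{-1/2}\sum_i(f(Y(i))-\E f(Y(i)))$ and $h_x(y)=F^2 y\,\1_{\{y<x\}}$, observing that $\cg=\{h_x:x\in[0,\infty]\}$ is a VC class (it is $\cf\cdot h_\infty$ with $\cf=\{\1_{[0,x)}\}$), and then invoking \cite[Theorem~2.14.5]{VW}: for a VC class with envelope $h_\infty(y)=F^2 y$,
\[
\E^*\Bigl[e^{\la\,\sup_{f\in\cg}|\G_N f|^{u/2}}\Bigr]\le c\,\E\bigl[e^{\la|h_\infty(Y)|^{u/2}}\bigr]
= c\,\E\bigl[e^{\la F^u|z|^{u}}\bigr],
\]
which is finite (uniformly in $N$) exactly when $\la<(\beta/F)^u$. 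Since $\la|\ep_{p,N}|^{u/2}=\la N^{-u/4}|\G_N h_{U_p}|^{u/2}$, the constraint $\la\le\ga N^{u/4}$ with $\ga<(\beta/F)^u$ puts the effective parameter $\la N^{-u/4}$ below $(\beta/F)^u$, and \eqref{eq:exp-bnd-epsilon} follows. The point you are missing is a tool that controls the exponential moment of the \emph{centered, $\sqrt{N}$-normalized} supremum directly in terms of the envelope; once you have that, the $N^{u/4}$ drops out automatically from $\ep_{p,N}=N^{-1/2}\G_N h_{U_p}$.
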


\begin{proof}
Recall that $\ep_{p,N}$ is defined by:
$$
\ep_{p,N}=g_N(U_p)-g(U_p)=\frac{F^2}{N}
\lp\sum_{j=1}^{N}Y(j)\, \1_{\{Y(j)< U_p\}}-g(U_p)\rp,
$$
for a collection $\{Y(i);\, i\le N\}$ of i.i.d random variables, where $Y(i)$ can be written as
$Y(i)=z(i)^2$ and $z(i)$ is a generalized Gaussian random variable with parameter $u>0$, whose
density is given by (\ref{eq:gg-density}). For a fixed positive $x$, the fluctuations
$g_N(x)-g(x)$ are easily controlled thanks to the classical central limit theorem or large
deviations principle. The difficulty in our case arises from the fact that $U_p$ is itself a
random variable, which rules out the possibility of applying those classical results. However,
uniform central limit theorems and deviation inequalities have been thoroughly studied, and our
result will be obtained by translating our problem in terms of empirical processes like in
\cite{VW}.

\smallskip

In order to express $\ep_{p,N}$ in terms of empirical processes, consider $x\in [0,\infty]$ and define
$h_x:\R_+\to\R_+$ by $h_x(u)=F^2u\, \1_{\{ u < x\}}$. Next, for $f:\R_+\to\R$, set
$$
\G_N f=\frac{1}{N^{1/2}} \sum_{i=1}^{N} \lc f(Y(i)) -  \E[f(Y(i))]\rc,
$$
and with these notations in mind, notice that
$$
\G_N h_x=\frac{1}{N^{1/2}} \sum_{i=1}^{N} \lc h_x(Y(i)) -  g(x)\rc.
$$
It is now easily seen that
$$
\ep_{p,N}=N^{-1/2}\G_N h_{U_p},
$$ and the key to our result will be to get good control on $\G_N h_x$ in terms of $N$, uniformly in $x\in [0,\infty]$.

\smallskip
Let us consider the class of functions $\cg=\{h_x;\, x\in[0,+\infty]\}$. According to the terminology of
\cite{VW}, the uniform central limit theorems are obtained when $\cg$ is a {\emph{Donsker}} class
of functions. A typical example of Donsker setting is provided by some
VC classes (see \cite[Section 2.6.2]{VW}).
 The VC classes can be briefly described as sets of functions whose subgraphs can only shatter a
finite collection of points, with a certain maximal cardinality $M$, in $\R^2$. For instance, the
collections of indicators
$$
\cf=\lcl  \1_{[0,x)}; \, x\in [0,+\infty]\rcl.
$$
is  a VC class.
Thanks to \cite[Lemma 2.6.18]{VW}, our class $\cg$ is also of VC type, since it can be written as
$$\cg=\cf\cdot h=\left\{fh; f\in \cf\right\},
$$
where  $h:\R_+\to\R_+$ is defined by $h(u)=h_{\infty}(u)=F^2u$.

\smallskip

In order to state our concentration result, we still need to introduce the envelope $
\overline{\cg}$ of $\cg$, which is a function $\overline{\cg}:\R_+\to\R$ defined as
$$
\overline{\cg}(u)=\sup\{f(u);\, f\in\cg \},\ u\in\mathbb{R}_+.
$$
Note that in our particular example of application, we simply have $
\overline{\cg}=h$. 
Let us also introduce the following notation:
$$
\cn[\G_N;\cg,\la,m]\equiv \E^*\lc e^{\la\sup_{f\in \cg}|\G_N f|^m} \rc,
\quad\mbox{and}\quad
\cn[h;\la,m]\equiv \E\lc e^{\la |h(Y)|^m} \rc,
$$
where $\E^*$ is the  outer expectation (defined in \cite{VW} for measurability issues), $Y$ is the
square of a generalized Gaussian random variable with parameter $u>0$, $\la>0$ and $m\ge 0$.

\smallskip

Then, since $\cg$ is a VC class with measurable envelope, $\cg$ is a Donsker class and \cite[Theorem 2.14.5 p. 244]{VW} leads to:
$$
\cn[\G_N;\cg,\la,m] \le c \, \cn[h; \la,m],
$$
with $c$ a finite positive constant which does not depend on $N,\la$ and $\cg$. 
Furthermore, since $Y$ is the square of a generalized Gaussian random variable with parameter $u$,
it is readily checked that
$$
\cn[h; \la,m]<\infty
$$ for
$\la$ small enough (namely $\la < (\beta/F)^u$) and $m=u/2$. Recalling now that $\ep_{p,N}=N^{-1/2}\G_N h_{U_p}$, we have obtained:
$$
\mathbb{E}\left[ \textup{e}^{\la  | N^{1/2}\ep_{p,N}|^{u/2} }\right] \le \cn[\G_N;\cg,\la,u/2]\le c\cn[h; \ga,u/2]=K<\infty
$$
for $\lambda\le \ga<(\beta/F)^u$, which easily implies our claim (\ref{eq:exp-bnd-epsilon}). \\

Let $l>0$. Then,
$$
\mathbb{P}\left(|\ep_{p,N}|\ge l\right)=\mathbb{P}\left(\textup{e}^{\ga N^{u/4} |\ep_{p,N}|^{u/2}}\ge \textup{e}^{\ga l^{u/2}N^{u/4}}\right).
$$
The concentration property (\ref{eq:exp-bnd-proba-epsilon}) is thus an easy consequence of
(\ref{eq:exp-bnd-epsilon}) Markov's inequality.

\end{proof}

\subsection{Supercritical case: $\mathbf{F>F_c}$}
In this section, we assume 
that $F>F_c$. Then it is easily deduced from the variations of $d$ given above that $g_{1,u}\equiv
g$ has the following form, as a function from $\R_+$ to $\R_+$ (see Figure 1): \ben
\item
$g$ is increasing and $\lim_{x\to\infty}g(x)=F^2 \triangleq g_\infty$.
\item
$g$ has exactly two fixed points apart from 0, called $\ell_1$ and $x^*$, with $\ell_1<x^*$.
\item
There exists $\ell_2\in (\ell_1,x^*)$ such that $g'(l_2)\le 1$ and $g$ is concave on $[\ell_2,\infty)$.
\item Let $\delta>0$ such that $l_1<l_2+\delta<x^*.$ Then, $g(l_2+\delta)>l_2+\delta$.
\een

With these properties
in mind, we can study the convergence of the deterministic sequence $\{x_n;\, n\ge 0\}$ defined
recursively by $x_0=\infty$ and $x_{n+1}=g(x_n)$. Indeed, it is easily checked that $x_n$ is
decreasing to $x^*$ as $n\to\infty$. Furthermore, let $M_{x^*}=\sup\{g'(x);\, x\ge x^*\}=g'(x^*)$,
and recall that $M_{x^*}<1$. Then
$$
|x_{n+1}-x^*|=x_{n+1}-x^*=g(x_n)-g(x^*)\le M_{x^*} \lp x_{n}-x^* \rp,
$$
which means that a geometric convergence occurs: inductively, it is readily checked that,
for $n\ge 1$:
\beq\label{eq:geom-cvgce-xn}
|x_n-x^*|\le M_{x^*}^{n-1} \lp  g_\infty-x^*\rp,
\eeq
where we recall that $g_\infty=\lim_{x\to\infty}g(x)$.

\smallskip

We are now ready to prove the convergence result for the peeling algorithm, in terms of a
concentration result for the noisy dynamics around the deterministic one:
\begin{theorem}\label{thm:cvgce-noisy-dyn}
Assume $F>F_c$ (where these quantities are defined at Section \ref{sec:critic-constants}) and that
the wavelets coefficients are distributed according to a generalized Gaussian random variable with
parameter $u>0$, whose density is given by (\ref{eq:gg-density}). Let $\al<1/2$,
$C=\frac{-1}{\ln\lp M_{x^*}\rp}+\eta$ with $\eta>0$. For any $N\in \N^*$, let $n=n(N)=\left[C\al
\ln N\right]+1$. Then,  there exist  $A,\widetilde{\ga}$ two positive finite constants such that
for all $N\in\N^*$, and any $F$ lying in an arbitrary compact interval $[0,F_0]$, we have
$$
\mathbb{P}\lp \lln U_n-x^*\rrn\ge N^{-\al} \rp \le A\textup{e}^{-\widetilde{\ga} N^{(1/2-\al)u/2}}.
$$
\end{theorem}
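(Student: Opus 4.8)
## Proof Plan

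The plan is to exploit the decomposition from Proposition~\ref{prop:expression-noisy-dynamics}, namely $U_n = g^{\circ n}(U_0) + R_n$, and to control each piece separately. First, the deterministic part $g^{\circ n}(U_0) = x_n$ satisfies the geometric convergence estimate \eqref{eq:geom-cvgce-xn}, so $|x_n - x^*| \le M_{x^*}^{n-1}(g_\infty - x^*)$. With the choice $n = [C\al \ln N] + 1$ and $C = \frac{-1}{\ln(M_{x^*})} + \eta$, one has $M_{x^*}^{n-1} \le M_{x^*}^{C\al \ln N} = \exp(C\al \ln N \cdot \ln M_{x^*})$; since $C \ln M_{x^*} < -1$ (strictly, by the $+\eta$), this is bounded by $N^{-\al(1+\eta|\ln M_{x^*}|)} = o(N^{-\al})$. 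Hence for $N$ large the deterministic term contributes strictly less than, say, $\frac12 N^{-\al}$ to $|U_n - x^*|$, and it suffices to show $\mathbb{P}(|R_n| \ge \frac12 N^{-\al})$ is bounded by the desired exponential. (For small $N$ one absorbs finitely many terms into the constant $A$.)

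Next I would bound $|R_n|$. From Proposition~\ref{prop:expression-noisy-dynamics}, $R_n = \sum_{p=0}^{n-1} \ep_{p,N} \prod_{q=2}^{n-p} g'(C_{p+q})$, where each $C_j \in [g^{\circ(j-1)}(U_0); U_{j-1}]$. The key structural point is that the products of derivatives are bounded: since $x_{j-1}$ is decreasing to $x^*$ and lies in the region where $g$ is concave and increasing past $\ell_2$, with $g'(x^*) = M_{x^*} < 1$, one expects $g'(C_{p+q}) \le M$ for some constant $M < 1$ uniformly, at least once we are on the good event where $U_{j-1}$ stays close to $x_{j-1}$. This is the delicate point: $C_j$ lies between the deterministic iterate and the random iterate $U_{j-1}$, so the bound $g'(C_j) \le M < 1$ requires knowing a priori that $U_{j-1}$ has not strayed far below $\ell_2$. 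One handles this by a bootstrap/induction on the event $\{|U_j - x_j| \le N^{-\al} \text{ for all } j \le n\}$, controlling its complement via the tail estimate \eqref{eq:exp-bnd-proba-epsilon} applied at each step; since $n = O(\ln N)$, a union bound over the $n$ steps costs only a factor $n$, harmless against the exponential. On the good event, $|R_n| \le \sum_{p=0}^{n-1} |\ep_{p,N}| M^{n-p-1} \le \frac{1}{1-M} \max_{0 \le p < n} |\ep_{p,N}|$.

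It then remains to estimate $\mathbb{P}(\max_{0 \le p < n} |\ep_{p,N}| \ge c N^{-\al})$ for a suitable constant $c$. By a union bound over $p = 0, \dots, n-1$ and Lemma~\ref{Samy} (estimate \eqref{eq:exp-bnd-proba-epsilon} with $l = cN^{-\al}$), this is at most $n K \exp(-\ga (cN^{-\al})^{u/2} N^{u/4}) = nK\exp(-\ga c^{u/2} N^{(1/2 - \al)u/2})$. Since $\al < 1/2$ the exponent $(1/2-\al)u/2$ is strictly positive, and the polynomial-in-$\ln N$ factor $n$ is absorbed by slightly shrinking the rate constant to some $\widetilde{\ga} < \ga c^{u/2}$. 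Assembling the three contributions — the negligible deterministic term, the geometric-sum bound on $R_n$ on the good event, and the union bound over bad events — yields the stated inequality with constants $A, \widetilde{\ga}$ that can be taken uniform over $F \in [0, F_0]$ (uniformity follows because $M_{x^*}$, $\ell_2$, $x^*$ and the constant $K$ in Lemma~\ref{Samy} all depend continuously on $F$ over the compact interval, once $F > F_c$; the boundary near $F_c$ needs a small separate remark, or the compact interval is understood to be within $(F_c, F_0]$).

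\textbf{Main obstacle.} The genuine difficulty is the interdependence between the random iterates $U_j$ (hence the points $C_j$) and the errors $\ep_{p,N}$: one cannot directly bound $g'(C_j) < 1$ without first knowing $U_j$ is close to $x_j$, yet that closeness is exactly what $R_n$ is supposed to quantify. The resolution is the inductive good-event argument sketched above, where at each of the $O(\ln N)$ steps one simultaneously propagates the deterministic contraction and the freshly-added error bound; making this induction tight enough that the accumulated error stays below $N^{-\al}$ — rather than blowing up geometrically in the wrong direction — is where the choice of $n \asymp C\al\ln N$ and the contraction factor $M_{x^*}$ interact precisely.
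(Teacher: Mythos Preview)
Your proposal is correct and follows the paper's strategy closely: same decomposition via Proposition~\ref{prop:expression-noisy-dynamics}, same geometric bound \eqref{eq:geom-cvgce-xn} on the deterministic part, same concentration input from Lemma~\ref{Samy}, and the same logarithmic choice of $n$. The two points where you diverge are minor but worth noting. First, the paper dissolves your ``main obstacle'' without any bootstrap or induction: the bad event is simply $\widetilde\Omega_n=\{\exists\,k\le n:\,U_k\le\ell_2+\delta\}$ for a fixed $\delta>0$ with $\ell_2+\delta<x^*$, and on the first crossing step $k$ one has $U_{k-1}>\ell_2+\delta$, hence $g(U_{k-1})>g(\ell_2+\delta)>\ell_2+\delta\ge U_k=g_N(U_{k-1})$, so $\ep_{k-1,N}\le -L$ with $L=g(\ell_2+\delta)-(\ell_2+\delta)>0$; a single application of \eqref{eq:exp-bnd-proba-epsilon} bounds each $\mathbb P(\Omega_k)$, and the union over $k\le n$ costs only the harmless factor $n$. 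This sidesteps the circularity you worried about, since the threshold $\ell_2+\delta$ is fixed and does not depend on $N^{-\al}$. Second, on $\widetilde\Omega_n^c$ (where all $g'(C_j)\le\rho<1$) the paper bounds $\sum_p|\ep_{p,N}|\rho^{n-1-p}$ not via your $\max$-plus-union argument but by rewriting it as a convex combination and applying Jensen's inequality with a convexified version $a_u$ of $x\mapsto e^{x^{u/2}}$; your more elementary route also works, as the extra factor $n=O(\ln N)$ is absorbed into the constant $A$.
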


\begin{remark}\label{rmk:cvgce-noisy-dyn}
This theorem induces three kind of information about the convergence of our algorithm: {\it (i)}
For a fixed number of wavelet coefficients $N$, the optimal number of iterations $n$ for the
peeling algorithm is of order $\ln(N)$. {\it (ii)} Once $n$ is fixed in this optimal way, $U_n$ is
close to the fixed point $x^*$ of $g$, the magnitude of $|U_n-x^*|$ being of order $N^{-(1/2-\ep)}$
for any $\ep>0$.  {\it (iii)} The deviations of $U_n$ from $x^*$ are controlled exponentially in
probability.
\end{remark}

\begin{proof}[Proof of Theorem \ref{thm:cvgce-noisy-dyn}]
Observe first that, owing to Proposition \ref{prop:expression-noisy-dynamics} and inequality
(\ref{eq:geom-cvgce-xn}), we have
$$
\lln U_n-x^*\rrn =\lln g^n\lp U_0\rp-x^*-R_n\rrn\le M_{x^*}^{n-1}(g_\infty-x^*)+\lln R_n \rrn,
$$
for any $n\ge 1$. Let then $\hat{\delta}>0$ and let us fix $n\ge 1$ such that
\begin{equation}
\label{controle-n}
M_{x^*}^{n-1}(g_\infty-x^*)\le \frac{\hat{\delta}}{2},
\end{equation}
i.e. $n\ge 1+ \ln(\hat{\delta}/(2g_\infty-2x^*))/\ln( M_{x^*})$. Then it is readily checked that:
\beq %
\label{eq:bnd1-Un-x} \mathbb{P}\lp \lln U_n-x^*\rrn\ge \hat{\delta} \rp \le \mathbb{P}\lp
\lln R_n\rrn\ge \frac{\hat{\delta}}{2}  \rp,
\eeq %
and we will now bound the probability in the right hand side of this inequality. To this purpose,
let us introduce a little more notation: for $n,k\ge 1$, let $\Omega_k$ be the set defined by
$$
\Omega_k=\lcl \om\in\Omega; \, \inf\lcl j\ge 0\,/\, U_j\le \ell_2+\delta\rcl=k\rcl,
$$
and  set also
$
\widetilde{\Omega}_n=\bigcup_{k=1}^n \Omega_k.
$
Then we can decompose (\ref{eq:bnd1-Un-x}) into:
\begin{equation}
\label{majE}
\mathbb{P}\lp \lln U_n-x^*\rrn\ge \hat{\delta} \rp \le \mathbb{P}\lp \widetilde{\Omega}_n \rp +\mathbb{P}\lp  \widetilde{\Omega}_n^c\cap\lcl \lln R_n\rrn \ge \frac{\hat{\delta}}{2}\rcl  \rp.
\end{equation}
We will now treat these two terms separately:

\smallskip

\noindent
{\it Step 1: Upper bound for} $\mathbb{P}( \widetilde{\Omega}_n )$.
Let us fix $k\ge 1$ and first study $\mathbb{P}\lp \Omega_k \rp$. To this purpose, observe first that
$$
\Omega_k\subset \lcl U_{k}\le l_2+\delta<U_{k-1} \rcl.
$$
Recall 
that $\ell_2+\delta$ satisfies $g(\ell_2+\delta)>\ell_2+\delta$. 
Hence, since $U_{k}=g_N(U_{k-1})$ and invoking the fact that $g$ is an increasing function, the
following relation holds true on $\Omega_k$:
$$
g_N(U_{k-1})\le l_2+\delta
\quad\mbox{and}\quad
g(l_2+\delta )<g(U_{k-1}).
$$
We have thus proved that
$$
\Omega_k\subset \lcl g_N(U_{k-1})-g(U_{k-1})\le l_2+\delta-g(l_2+\delta)\rcl,
$$
where $l_2+\delta-g(l_2+\delta)\equiv -L<0.$ Since $g_N(U_{k-1})-g(U_{k-1})=\ep_{k-1,N}$ by
definition, we end up with:
$$
\mathbb{P}(\Omega_k)\le \mathbb{P}\lp \lln \ep_{k-1,N}\rrn \ge L\rp.
$$
A direct application of Lemma \ref{Samy} yields now the existence of $\ga,K\in (0,\infty)$ such
that for all $k\ge 1$ and all $N\ge 1$
$$
\mathbb{P}(\Omega_k)\le K \textrm{e}^{-\ga L^{u/2} N^{u/4}}.
$$
Hence
\begin{equation}
\label{Borne1}
\mathbb{P}(\widetilde{\Omega}_n)\le \sum_{k=1}^n
\mathbb{P}(\Omega_k)\le K n\textrm{e}^{-\ga L^{u/2} N^{u/4}}.
\end{equation}

\smallskip

\noindent {\it Step 2: Upper bound for} $\mathbb{P}(  \widetilde{\Omega}_n^c\cap\{ \lln R_n \rrn
\ge \frac{\hat{\delta}}{2}\}  ).$ We have constructed the set $\widetilde{\Omega}_n$ so that, for
all $2\le k\le n+1$, the random variables $C_k$ introduced at Proposition
\ref{prop:expression-noisy-dynamics} satisfy $0\le g'\lp C_k\rp \le \rho <1$ on
$\widetilde{\Omega}_n^c$. Thus
\begin{equation}
\label{cont-B2}
\mathbb{P}\lp  \widetilde{\Omega}_n^c\cap\lcl \lln R_n \rrn \ge \frac{\hat{\delta}}{2}\rcl  \rp\le
\mathbb{P}\lp \sum_{p=0}^{n-1} \lln \ep_{p,N}\rrn \rho^{n-1-p}\ge  \frac{\hat{\delta}}{2}\rp
\le \mathbb{P}\lp \sum_{p=0}^{n-1} \lln \ep_{p,N}\rrn \nu_p\ge  M_{n,\hat{\delta}}\rp,
\end{equation}
where we have set
$$
\nu_p=\frac{\rho^{n-1-p}(1-\rho)}{1-\rho^n},
\quad\mbox{and}\quad
M_{n,\hat{\delta}} =\frac{\hat{\delta}(1-\rho)}{2(1-\rho^n)},
$$
so that $\{\nu_p;\, 0\le p\le n-1\}$ is a probability measure on $\{0,\ldots,n-1\}$.

\smallskip

We introduce now a convex non-decreasing function $a_u$ which only depends on the shape parameter
$u$, and which behaves like $\exp(x^{u/2})$ at infinity. Specifically, if $u\ge 2$, we simply
define $a_u$ on $\mathbb{R}_+$ by
$$
a_u(x)=\textup{e}^{x^{u/2}}.
$$
When $u<2$, setting $s_u=\left(2/u-1\right)^{2/u}$, then $x\mapsto\exp(x^{u/2})$ is concave on
$[0,s_u]$ and convex on  $[s_u,+\infty)$. Then, we modify a little the definition of $a_u$ in
order to obtain a convex function: we set
\begin{equation}\label{eq:def-a-u}
a_u(x)=\textup{e}^{x^{u/2}} \1_{[s_u,\infty)} + \textup{e}^{s_u^{u/2}} \1_{[0,s_u)}
\end{equation}
where $s_u=\left(2/u-1\right)^{2/u}$.

\smallskip

Since $a_u$ is a non-decreasing  function, for all $\la > 0$, relation \eqref{cont-B2} implies
that:
\begin{eqnarray*}
\mathbb{P}\lp  \widetilde{\Omega}_n^c\cap\lcl \lln R_n \rrn \ge \frac{\hat{\delta}}{2}\rcl  \rp
&\le&
 \mathbb{P}\lp a_u \lp \la\sum_{p=0}^{n-1} \lln \ep_{p,N}\rrn \nu_p \rp
\ge a_u \lp \la  M_{n,\hat{\delta}}\rp\
\rp  \\
&\le& \frac{1}{a_u\lp\la M_{n,\hat{\delta}}\rp}\mathbb{E}\lc a_u\lp \la  \sum_{p=0}^{n-1}| \ep_{p,N}| \nu_p \rp\rc,
\end{eqnarray*}
where we have invoked Markov's inequality for the second step. Hence, applying Jensen's inequality, for all $\la>0$,
 we obtain:
$$
\displaystyle
\mathbb{P}\lp  \widetilde{\Omega}_n^c\cap\lcl \lln R_n \rrn\ge \frac{\hat{\delta}}{2}\rcl  \rp\le \frac{1}{a_u\left(\la M_{n,\hat{\delta}}\right)}\sum_{p=0}^{n-1}\nu_p\mathbb{E}\lp a_u\lp\la  | \ep_{p,N}|\rp \rp.
$$
Furthermore, owing to the definition (\ref{eq:def-a-u}) of $a_u$,
$$
\mathbb{E}\lp a_u\lp\la  | \ep_{p,N}|\rp\rp\le \mathbb{E}\lp\textup{e}^{\la^{u/2}|\ep_{p,N}|^{u/2}}\rp+\textup{e}^{2/u-1}
$$
for all $p\ge 0,$ all $N\ge 1$ and all $\la>0$.

\smallskip

Then, applying Lemma~\ref{Samy}, we have:
$$
\displaystyle
\mathbb{P}\lp  \widetilde{\Omega}_n^c\cap\lcl \lln R_n \rrn\ge \frac{\hat{\delta}}{2}\rcl  \rp\le \frac{K+\textup{e}^{2/u-1}}{a_u\left(\la M_{n,\hat{\delta}}\right)}
$$
for any $\la \le \ga^{2/u}  N^{1/2}$. Since $M_{n,\hat{\delta}}\ge (1-\rho)\hat{\delta}/2$ and since $a_u$ is a non-decreasing function, by choosing $\la=\ga^{2/u}  N^{1/2}$, we obtain:
$$
\mathbb{P}\lp  \widetilde{\Omega}_n^c\cap\lcl |R_n|\ge \frac{\hat{\delta}}{2}\rcl  \rp \le \frac{K_1}{a_u\lp \ga_1\hat{\delta} N^{1/2}\rp}
$$
with $\ga_1=(1-\rho)\ga^{2/u}/2>0$ and $K_1=K+\textup{e}^{2/u-1}$. \\

Choose now $\hat{\delta}=N^{-\al}$, with $\al<1/2$. Observe that  for $N$ large enough,
$\ga_1\hat{\delta}N^{1/2}>s_u$ and thus $a_u\lp \ga_1\hat{\delta}
N^{1/2}\rp=\textup{e}^{\ga_1^{u/2}N^{(1/2-\al)u/2}}$. Hence, there exists a finite positive
constant $K'$ such that for all $N\ge 1$ and $p\ge 0$,
\begin{equation}
\label{Borne2}
\mathbb{P}\lp  \widetilde{\Omega}_n^c\cap\lcl |R_n|\ge \frac{1}{2N^\al}\rcl  \rp \le K'\textup{e}^{-\widetilde{\ga}N^{(1/2-\al)u/2}}
\end{equation}
with $\widetilde{\ga}=\ga_1^{u/2}.$

\smallskip

\noindent {\it Step 3: Conclusion.} Putting together~ (\ref{eq:bnd1-Un-x}), \eqref{majE},
\eqref{Borne1} and \eqref{Borne2}, choosing $\hat{\delta}=N^{-\al}$ with $\al<1/2$, we end up
with:
$$
\mathbb{P}\lp \lln U_n-x^*\rrn\ge N^{-\al} \rp \le nK\textup{e}^{-\ga L^{u/2} N^{u/4}}+K'\textup{e}^{-\widetilde{\ga} \ N^{(1/2-\al)u/2}},
$$
for any $n$ such that $n\ge 1-\al \ln(N/(2g_\infty-2x^*))/\ln( M_{x^*})$.
Choose now  
 $n=[C\al \ln N]+1$.  If the following condition holds true:
$$
\lim_{N\to+\infty} \left(n+\al \ln(N/(2g_\infty-2x^*))/\ln( M_{x^*})\right)=+\infty
$$
i.e. if $C>-1/\ln\lp M_{x^*}\rp$, then for $N_0$ large enough,
$$
n=[C\al \ln N]+1\ge 1-\al \ln(N/(2g_\infty-2x^*))/\ln( M_{x^*}).
$$
We thus choose $C=-1/\ln\lp M_{x^*}\rp+\eta$ with $\eta>0$. Hence, for $N\ge N_0$ and $n=[C\al \ln N]+1$, we have:
$$
\mathbb{P}\lp \lln U_n-x^*\rrn\ge N^{-\al} \rp \le nK\textup{e}^{-\ga L^{u/2} N^{u/4}}+K'\textup{e}^{-\widetilde{\ga} \ N^{(1/2-\al)u/2}}.
$$
Therefore, since $(1/2-\al)u/2\le u/4$  we have proved that there exists a positive finite
constant $A$ such that for all $N\in\N^*$,
$$
\mathbb{P}\lp \lln U_n-x^*\rrn\ge N^{-\al} \rp \le A\textup{e}^{-\widetilde{\ga} N^{(1/2-\al)u/2}},
$$
which is the desired result.
\end{proof}

\subsection{Subcritical case: $\mathbf{F<F_c}$}
We show in this section that the choice of the constant $F_c$ for the peeling algorithm is optimal
in the following sense: if one chooses a parameter $F<F_c$, then the threshold sequence converges
to 0 with high probability. Specifically, we get the following result:
\begin{proposition}\label{Asy:F<Fc}
Consider $F<F_c$ and assume that our signal $z$ satisfies Hypothesis~\ref{hyp:1.1}. Let $N\in
\N^*$, $\al<1/2$, and $n\ge Q_N$, where
$$
Q_N= \max\lp
1+ \frac{\al \ln(N)+\ln(2 g_{\infty})}{\ln{1/\ka}}; \, 1\rp.
$$
Then,  there exist  $A,\widetilde{\ga}$ two positive finite constants (independent of $N$ and $n$) such that
\begin{equation}\label{eq:ineq-P-subcritic}
\mathbb{P}\lp U_n \ge N^{-\al} \rp \le A\textup{e}^{-\widetilde{\ga} N^{(1/2-\al)u/2}}.
\end{equation}
\end{proposition}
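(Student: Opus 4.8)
The plan is to treat the subcritical case by a direct linear–recursion argument, which is in fact \emph{lighter} than the supercritical one: for $F<F_c$ the deterministic map $g=g_{\si,u}$ contracts \emph{globally} toward its unique fixed point $0$, so no analogue of the sets $\Omega_k$ used in Theorem~\ref{thm:cvgce-noisy-dyn} is needed. The first step is to extract a global contraction constant. Since $F<F_c$ the only fixed point of $g$ is $0$; combining this with $g(0)=0$, $g'(0)=0$ (clear from the expression of $g'$ in Section~\ref{sec:critic-constants}) and the fact that $d:=g-\id$ has $d'(0)=-1<0$, continuity of $d$ together with the absence of positive zeros forces $g(x)<x$ for all $x>0$. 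Moreover $g(x)/x\to g'(0)=0$ as $x\downarrow0$, while the boundedness of $g$ gives $g(x)/x\to0$ as $x\to\infty$; hence $x\mapsto g(x)/x$ extends to a continuous function on $[0,+\infty]$ which is $<1$ throughout $(0,\infty)$, so that $\ka:=\sup_{x>0}g(x)/x$ is attained and $<1$. By the scaling identity (\ref{eq:relation-f-sigma-b}) this $\ka$ depends only on $u$ and $F$; in particular $g(x)\le\ka x$ for every $x\ge0$.

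Next I would unroll the noisy dynamics. As $U_0=+\infty$ we have $U_1=g_N(+\infty)=\frac{F^2}{N}\sum_{q\le N}Y(q)=g_\infty+\ep_{0,N}$, where $\ep_{0,N}$ is exactly the error of Lemma~\ref{Samy} associated with $U_0$. For $k\ge1$, $U_{k+1}=g_N(U_k)=g(U_k)+\ep_{k,N}\le\ka U_k+|\ep_{k,N}|$ by the previous step, so iterating from $U_1$ gives, for every $n\ge1$,
$$
U_n\ \le\ \ka^{\,n-1}g_\infty+\sum_{j=0}^{n-1}\ka^{\,n-1-j}\,|\ep_{j,N}|.
$$
The definition of $Q_N$ is precisely the one making $n\ge Q_N$ force $\ka^{\,n-1}g_\infty\le\tfrac12N^{-\al}$, so that $\{U_n\ge N^{-\al}\}\subset\bigl\{\sum_{j=0}^{n-1}\ka^{\,n-1-j}|\ep_{j,N}|\ge\tfrac12N^{-\al}\bigr\}$, and it remains to bound the probability of this last event.

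For that, write $\sum_{j=0}^{n-1}\ka^{\,n-1-j}|\ep_{j,N}|=\frac{1-\ka^{n}}{1-\ka}\sum_{j=0}^{n-1}\nu_j|\ep_{j,N}|$ with $\nu_j=\ka^{\,n-1-j}(1-\ka)/(1-\ka^{n})$ a probability weight on $\{0,\dots,n-1\}$, which reduces matters to bounding $\mathbb{P}\bigl(\sum_j\nu_j|\ep_{j,N}|\ge\tfrac{1-\ka}{2}N^{-\al}\bigr)$. This is then carried out exactly as in Step~2 of the proof of Theorem~\ref{thm:cvgce-noisy-dyn} (with $\rho$ replaced by $\ka$ and $\hat\delta$ by $N^{-\al}$): one applies the convex function $a_u$, Markov's and Jensen's inequalities, and then the exponential moment bound (\ref{eq:exp-bnd-epsilon}) of Lemma~\ref{Samy} with $\la=\ga^{2/u}N^{1/2}$, obtaining a finite constant $K_1$, independent of $n$, with $\mathbb{P}\bigl(\sum_j\nu_j|\ep_{j,N}|\ge\tfrac{1-\ka}{2}N^{-\al}\bigr)\le K_1\,a_u(\ga_1N^{1/2-\al})^{-1}$ and $\ga_1=\tfrac{1-\ka}{2}\ga^{2/u}$; for $N$ large enough this equals $K_1\,\textup{e}^{-\widetilde\ga N^{(1/2-\al)u/2}}$ with $\widetilde\ga=\ga_1^{u/2}$, and enlarging the constant to some $A<\infty$ absorbs the finitely many small values of $N$ (including the degenerate regime $Q_N=1$, which arises only when $g_\infty\le\tfrac12N^{-\al}$). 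I expect the only genuinely delicate point to be the very first step, namely checking that the contraction constant $\ka$ can be chosen \emph{uniformly over all of} $(0,\infty)$ and not merely near the fixed point; this is exactly what breaks down once $F\ge F_c$, and it is what makes the present argument short — the rest being a re-run of the estimates already developed for Theorem~\ref{thm:cvgce-noisy-dyn}.
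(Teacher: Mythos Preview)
Your proof is correct and follows essentially the same route as the paper's: a global contraction inequality $g(x)\le\ka x$, iteration of the noisy recursion to isolate $\ka^{n-1}g_\infty$ plus a weighted sum of errors, use of the threshold $Q_N$ to make the deterministic part $\le\tfrac12 N^{-\al}$, and then a direct appeal to Step~2 of the proof of Theorem~\ref{thm:cvgce-noisy-dyn} for the random part. The only noticeable difference is that you supply an explicit argument for the existence of a uniform $\ka<1$ (via $g'(0)=0$, boundedness of $g$, and compactness of $[0,+\infty]$), whereas the paper simply asserts it; your use of $|\ep_{j,N}|$ rather than $\ep_{j,N}$ is also slightly cleaner but changes nothing in substance.
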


\begin{proof}
In the subcritical case, the following property holds true for the function $g\equiv g_{1,u}$
defined by (\ref{eq:relation-f-sigma-b}): there exists a constant $\ka\in (0,1)$ such that, for
all $x\ge 0$, $0\le g(x)\le \ka x$. We thus have the following relation for the noisy dynamics of
$U_n$:
$$
U_{n}=g(U_{n-1})+\ep_{n-1,N}\le \ka U_{n-1}+\ep_{n-1,N}.
$$
Iterating this inequality, we have:
\begin{equation}
U_{n} \le \ka^{n-1} U_{1}+ \sum_{j=1}^{n-1} \ka^{j-1} \ep_{n-j,N}.
\end{equation}
According to the fact that $U_1=g_\infty+\ep_{0,N}$, we end up with:
\begin{equation}\label{eq:ineq-U-n-subcritic}
U_{n} \le 
\ka^{n-1} g_\infty + \sum_{j=1}^{n} \ka^{j-1} \ep_{n-j,N},
\end{equation}
a relation which is valid for any $n\ge 1$.

\smallskip

Consider now $\alpha<1/2$ and assume that $n\ge Q_N$, which ensures
$\ka^{n-1} g_\infty\le N^{-\al}/2$. Then invoking (\ref{eq:ineq-U-n-subcritic}), we have
$$
\mathbb{P}\lp U_n \ge N^{-\al} \rp \le
\mathbb{P}\lp  \sum_{j=1}^{n} \ka^{j-1} \ep_{n-j,N} \ge \frac{N^{-\al}}{2}\rp.
$$
We are thus back to the setting of the proof of Theorem \ref{thm:cvgce-noisy-dyn}, Step 2. Along
the same lines as in this proof (changing just the name of the constants there), the reader can
now easily check inequality (\ref{eq:ineq-P-subcritic}).

\end{proof}

\begin{remark}
We have chosen here to investigate the case of a probability $\mathbb{P}( U_n \ge N^{-\al} )$ and
of a logarithmic number of iterations $n$, in order to be coherent with Theorem
\ref{thm:cvgce-noisy-dyn}. However, in the simpler subcritical setting, one could have considered
a number of iterations of order $N$, opening the door to a possible almost sure convergence of
$U_n$ to 0. We have not entered into those details for sake of conciseness. In the same spirit, we
have not tried to solve the (much harder) problem of the behavior of our algorithm in the critical
case $F=F_c$.
\end{remark}

\section{Denoising algorithms implementation}
The previous sections aimed at giving an optimal criterion of convergence for the peeling
algorithm, in terms of the constant $F_c$, and under the assumption of a signal whose wavelets
coefficients are distributed according to a generalized Gaussian random distribution. We now wish
to test the algorithm we have produced in terms of denoising performances, on an empirical basis.

\smallskip

To this purpose, we shall compare various peeling algorithms (detailed at Section \ref{sec:thres}
below) and two traditional wavelet denoising procedures, namely \textit{Universal} and
\textit{SURE} shrinkage (see \cite{DJ94}). The comparison will be held in two types of situations:
first we consider the benchmark simulated signals proposed in the classical reference \cite{DJ94}.
Then we move to a medical oriented application, by observing the denoising effect of our
algorithms on ECG type signals. In both situations, we shall see that peeling algorithms enable a
good balance between smoothing and preserving the original shape of the noisy signal.

\subsection{Thresholds}\label{sec:thres}
Theorem \ref{thm:cvgce-noisy-dyn} and Remark \ref{rmk:cvgce-noisy-dyn} induce us to implement the
three following procedures:

\smallskip

\noindent \textbf{(1)} The first one exploits only implicitly the peeling approach and can be
reduced to a hard (or soft) thresholding in (\ref{eq:2}), where $T_f$ is obtained in the three
following ways: recall that, according to the value of the shape parameter $u$, we have computed a
critical value $F_c$ above which the peeling algorithm converges to a non trivial limit (see e.g.
Table 1) with high probability. We thus consider two supercritical cases, namely $F_{05}=1.05F_c$
and $F_{15}=1.15F_c$. In these two cases, we compute $\tau^*=(x^*)^{1/2}$, where $x^*$ is the
fixed point of the function $g_{\si,u}$ defined by (\ref{eq:g-sigma-u}), as analyzed at Section
\ref{sec:critic-constants}. We call respectively $T_{c,05}$ and $T_{c,15}$ these two values, which
serve as a threshold in (\ref{eq:2}). A third value of the threshold is also considered by taking
$F=F_m$ in (\ref{eq:g-sigma-u}), where $F_m$ is defined by (\ref{eq:Fm}), and computing then the
corresponding threshold $T_{cm}$. This allows a comparison with the older reference \cite{RHLW2}.
Let us stress the fact that for this first approach, no iterations are performed.

\smallskip

\noindent \textbf{(2)} The second procedure computes the final thresholds using a fixed number of
iterations in the peeling algorithm. According to one of the conclusions in Theorem
\ref{thm:cvgce-noisy-dyn}, we take this number of iterations equal to $\log N$. As in the first
approach, 3 thresholds were obtained, for $F=1.05F_c$, $F=1.15F_c$ and $F=F_m$. Theoretically,
this implementation yields some thresholds $\hat{T}_{c,05}$, $\hat{T}_{c,15}$ and $\hat{T}_{cm}$
which should be close to their respective \emph{exact} counterparts ${T}_{c,05}$, ${T}_{c,15}$ and
${T}_{cm}$ (within the conditions stated by Theorem \ref{thm:cvgce-noisy-dyn}).

\smallskip

\noindent \textbf{(3)} The third implementation is the one proposed in \cite{RHLW2} (fixed point
descent with a sufficient convergence condition $F=F_m$). The  resulting threshold will be noted
as $T_m$.

\smallskip

The relations between the 7 thresholds mentioned above are represented at Figure
\ref{fig:thresall} for different shape parameters $u$. The lines represent the theoretical values
$T_{c,05}$, $T_{c,15}$ and $T_{cm}$, while the shaded zones represent a superposition of the
estimated $\hat{T}_{c,05}$, $\hat{T}_{c,15}$ and $\hat{T}_{c,m}$ obtained over 100 simulations
(generalized gaussian vectors of  $N=10000$ points, zero mean and unitary standard deviation). The
averaged values of these estimations are very close to the theoretical values, which confirms that
peeling algorithms implemented with $\log N$ iterations converge to some thresholds very close to
the theoretical values (see Theorem~\ref{thm:cvgce-noisy-dyn} again). Moreover, the fixed point
implementation taken from \cite{RHLW2} gives almost the same final threshold as $\hat{T}_{cm}$
(the respective curves and shaded zone are merely superposed) and therefore is not figured here.

\begin{figure}
\centering\includegraphics[scale=0.55]{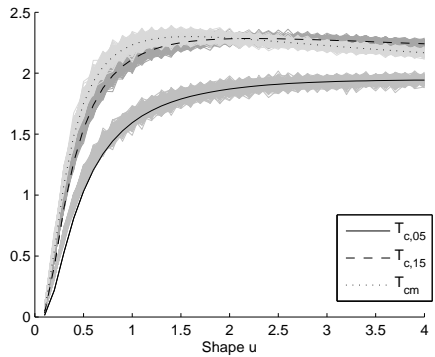}
\caption{Final thresholds for the 7 peeling algorithms for $u=[0.1 \dots 4]$. For comparison,
universal threshold $T_u=4.29$ for $N=10000$.\label{fig:thresall}}
\end{figure}

\subsection{Denoising: simulated signals}
To assess the denoising performances of the peeling algorithm, we used the 4 classical benchmarks
proposed in \cite{DJ94}, namely \textit{Blocks, Bumps, HeaviSine, Doppler} (figure
\ref{fig:donoho}), with 4 lengths ($N=[2048, 4096, 8192, 16384]$). The signals were normalized to
have unitary power. Twenty types of zero-mean random noise $n$ were generated according to
generalized gaussian with shape parameters $u_n=[0.2, 0.4, 0.6 \dots 3.8, 4]$.  The noise was then
scaled to obtain signal to noise ratios $SNR= [1, 2, 3]$, \ie [ 0, 3, 4.8] decibels.
Furthermore, the wavelet decomposition of the signal has been performed based on the {\tt sym 8}
wavelet, and the noise was added to the wavelet coefficients of the noise-free signals to obtain
the ``measured signal'' wavelet coefficients $z$. Each of these noisy signals were simulated 500
times to obtain averaged results. A statistical hypothesis testing showed that the wavelet
coefficient of the signals under consideration could be assimilated to generalized Gaussian random
variables, with the notable exception of the \textit{Bumps} process.

\smallskip


\begin{figure}
{\includegraphics[scale=0.85]{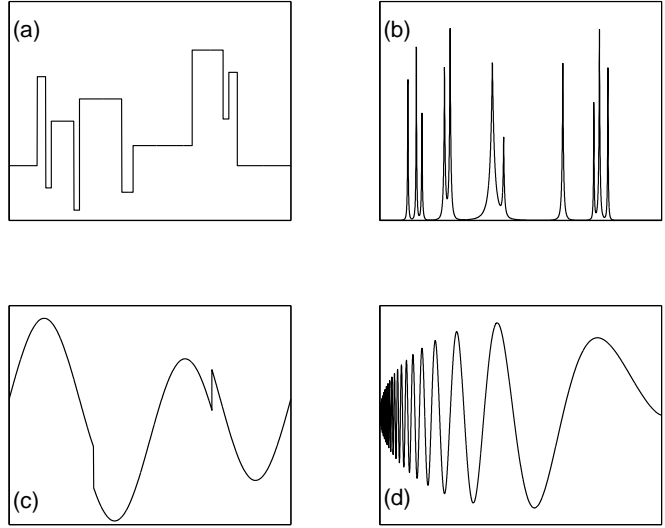}}
\caption{Benchmark signals: (a) \textit{Blocks}, (b) \textit{Bumps},
(c) \textit{HeaviSine,}, (d) \textit{Doppler} \label{fig:donoho}}
\end{figure}

The shape parameter $u_z$ of the total signal $z$, which determines the thresholds of the peeling
algorithms, was estimated using the absolute empirical moments $m_1$ and $m_2$, (with
$m_r=\mathds{E}[|z|^r]$, see~\cite{Mallat89, Kokkinakis05}), while the mean $\mu_z$ and the
standard deviation $\sigma_z$ were estimated using classical empirical estimators.

\smallskip

Denoising was performed by soft thresholding (instead of the hard one described by equation
(\ref{eq:2})) using the 7 algorithms described at Section \ref{sec:thres}, as well as the
classical \textit{Universal} and \textit{SURE} thresholding \cite{DJ94}, for comparison. More
elaborated wavelet denoising methods (either based on redundant wavelet transforms or on block
approaches \cite{Coifman95a, Cai01, Cai08}) were not considered for the comparison, since their
nature is different: all the algorithms tested in this paper are term-by-term approaches for
orthogonal wavelet transform thresholding.

\smallskip

The denoised estimate $\hat{x}$ of the original signal was reconstructed by inverse wavelet
transform. We recall here that \textit{Universal} thresholding aims to completely eliminate
Gaussian noise (and therefore it risks to distort the signal), while \textit{SURE} thresholding
estimates the original signal my minimizing the Stein Unbiased Risk Estimator of the mean squared
error between $x$ and $\hat{x}$, assuming also a Gaussian noise (thus basically aiming a minimum
distortion of the signal, as the peeling algorithms). The denoising performance was evaluated
using the signal to noise ratio after denoising:
$$
SNR_{den}=10\log_{10}\frac{\sum_{i=1}^N(x(i))^2}{\sum_{i=1}^N(x(i)-\hat{x}(i))^2}
$$

\smallskip

As expected, the results obtained for $\hat{T}_{c,05}$, $\hat{T}_{c,15}$, $\hat{T}_{cm}$ and $T_m$
are very similar to those obtained by ${T}_{c,05}$, ${T}_{c,15}$, ${T}_{cm}$, so only results of
the three latter are detailed here \footnote{These three algorithms are of course much faster than
their iterative versions.}. Synthetic comparisons are presented at Figure \ref{fig:denresult} for
different shape parameters $u_n$ of the noise distribution, for all the 4 benchmark signals and
for $N=4096$ (to ease the presentation, detailed tables of results are omitted, since the values can
be read with enough precision on the graphs).
\begin{figure}
\includegraphics[scale=0.55]{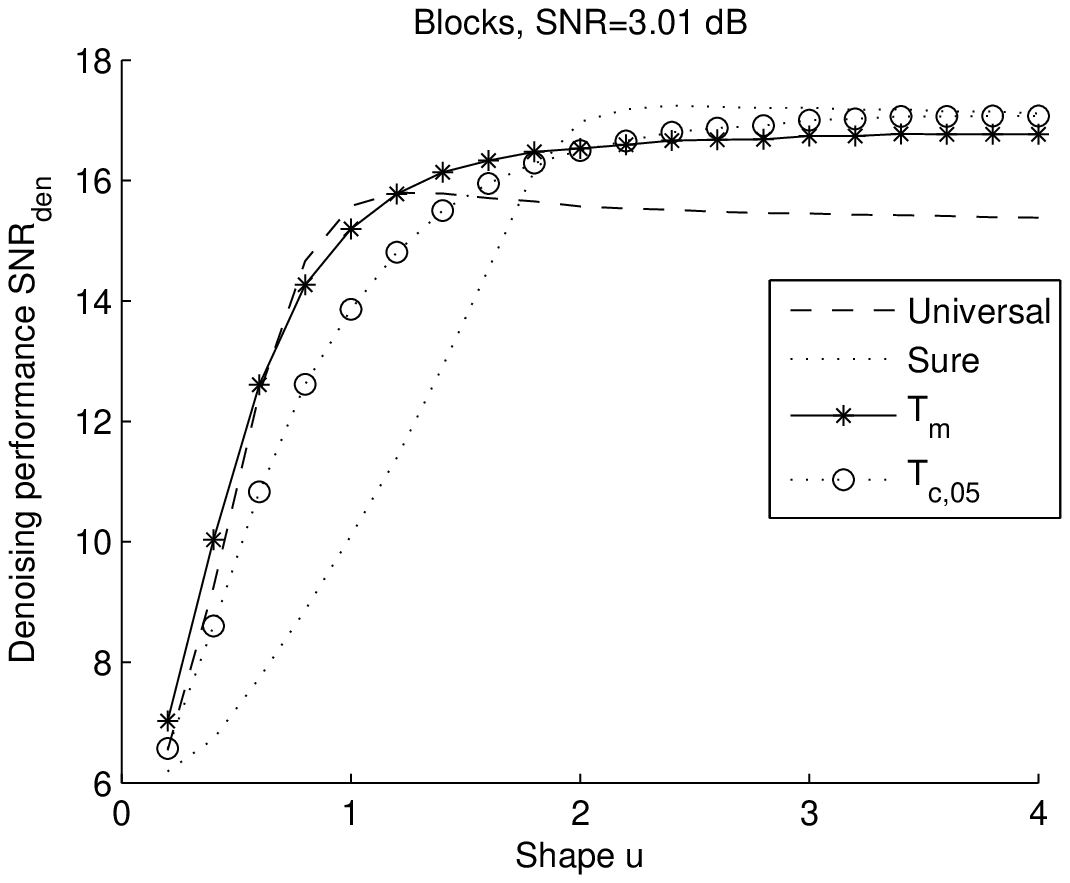} \quad \includegraphics[scale=0.55]{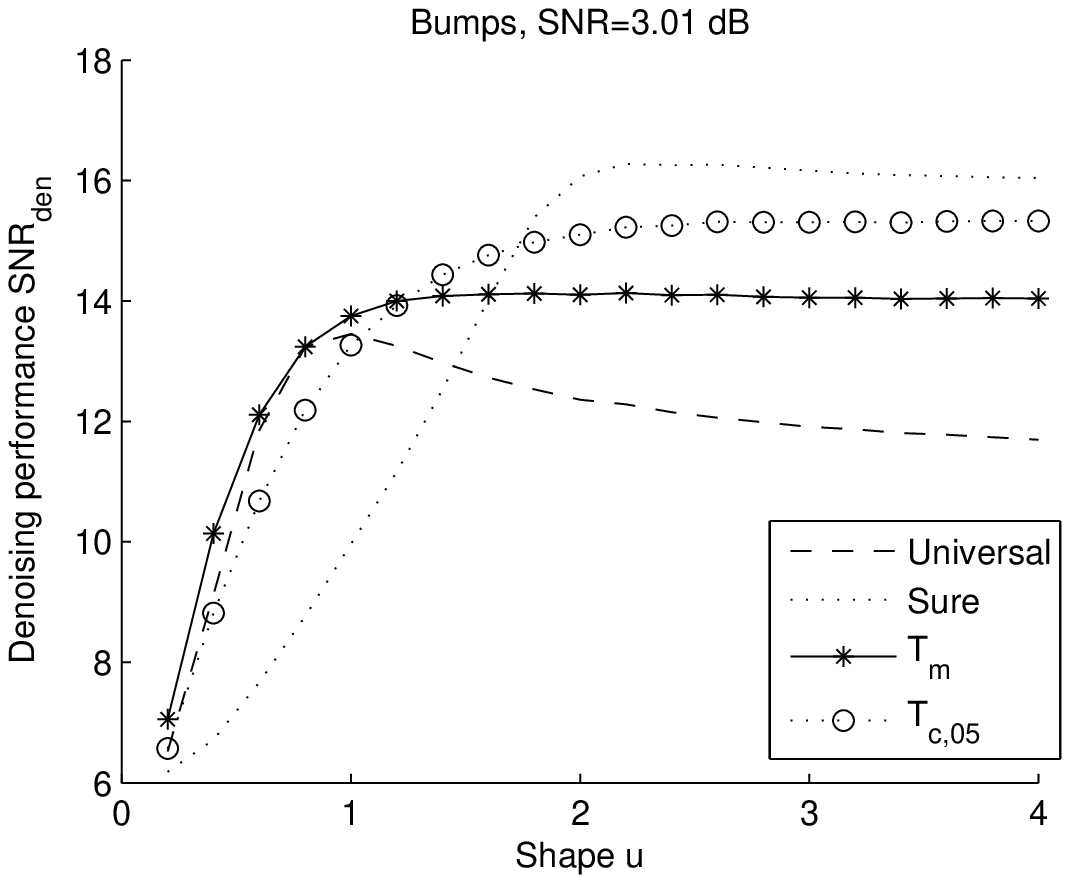}\\
\includegraphics[scale=0.55]{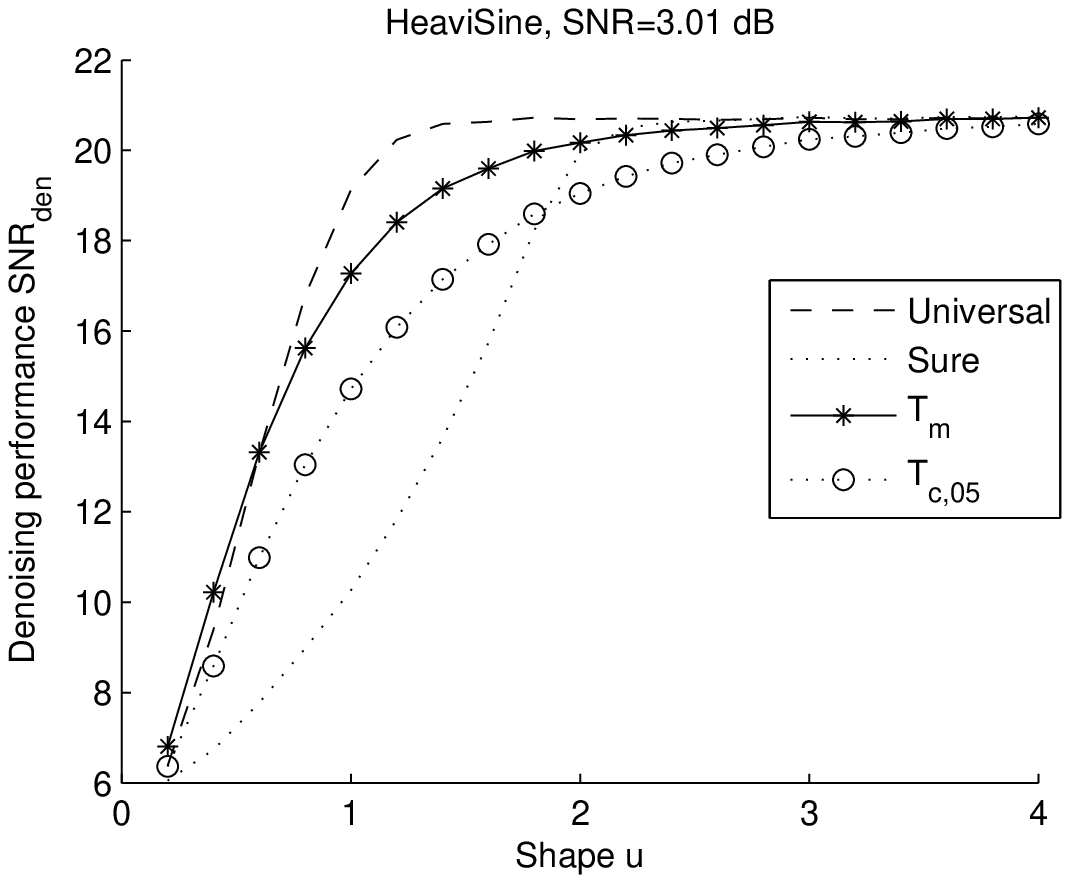} \quad \includegraphics[scale=0.55]{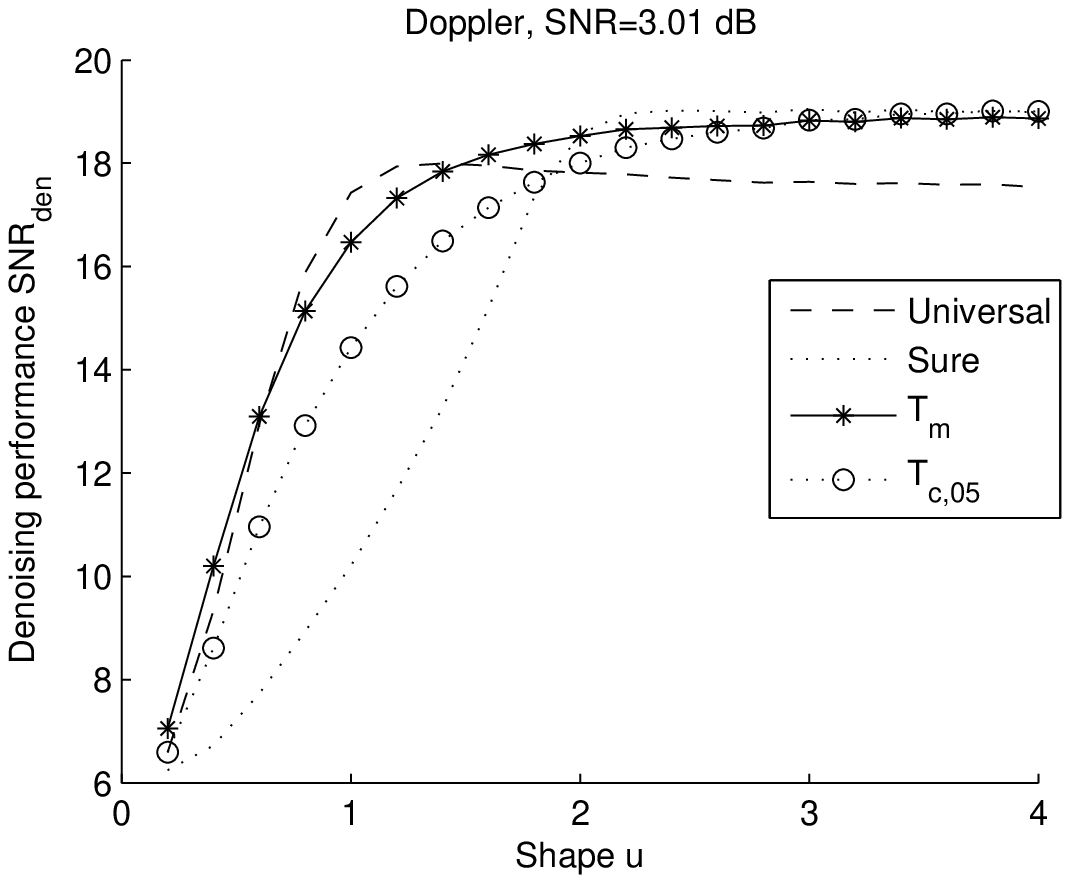}
\caption{Signal to noise ratios after denoising as a function of the noise distribution.
The presented graphs are obtained for noisy signals with $SNR$=3dB. \label{fig:denresult}}
\end{figure}
An illustrative example on the Block signal is also provided at Figure  \ref{fig:blnois}.
\begin{figure}
\includegraphics[scale=0.65]{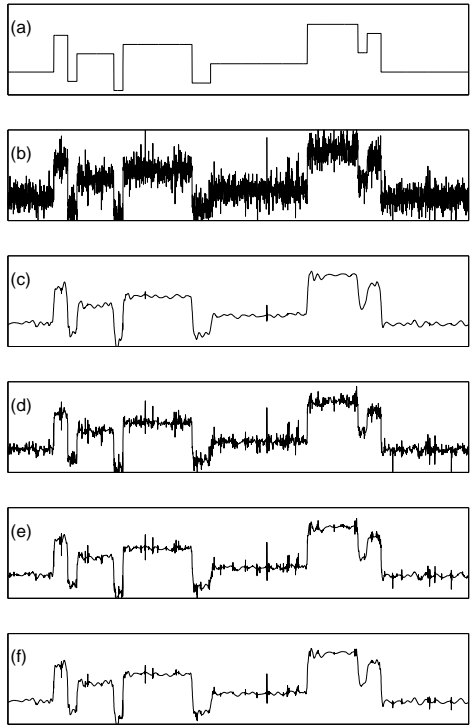}
\caption{Denoising example for $Blocks$: (a) original signal,
(b) noisy signal (Laplacian noise $u_n$=1, $SNR$=10dB), (c) \textit{Universal} ($SNR_{den}$=17.7dB),
(d) \textit{SURE} ($SNR_{den}$=16.9dB), (e) $T_{c,05}$ ($SNR_{den}$=18.6dB), (f) $T_{cm}$ ($SNR_{den}$=18.5dB)
\label{fig:blnois}}
\end{figure}

\smallskip

Several interesting observations can be made. Obviously, the noise type (shape parameter $u_n$)
greatly influences the performances of all algorithms: they are lower for heavy-tailed noise
distributions, which indicates that this type of noise is more difficultly eliminated from
measured signals. As one could expect from its development, \textit{SURE} thresholding is the best
choice for Gaussian noise, for which it also attains its best performance (this algorithm continue
to have a very good performance for higher $u_n$). The \textit{Universal} thresholding attains its
best performance for Laplacian noise $u=1$ and it has very good results for super-Gaussian noises
($u_n < 2$). On the contrary, its performances are the worst for high values of the shape
parameter of the noise (except for the very low frequency signal \textit{HeaviSine}, for which all
algorithms are similar for sub-Gaussian noises $u_n > 2$).

\smallskip

The peeling algorithms need a more detailed analysis: they are better than \textit{SURE} for
super-Gaussian noises, with the theoretical $T_{c,15}$ (or, equivalently, theoretical $T_{cm}$ and
iterative $\hat{T}_{cm}$, $\hat{T}_{c,15}$ and $T_{m}$) being slightly better than $T_{c,0.5}$.
The order between the two peeling algorithms tend to change for sub-Gaussian noise ($u_n > 2$),
especially for impulsive \textit{Blocks} and \textit{Bumps}. To conclude, it seems that for
super-Gaussian noises, \textit{Universal} thresholding and peeling algorithms are the best choice,
while for sub-Gaussian noises the results are almost similar, with \textit{SURE} thresholding
having the best performances when the noise is almost Gaussian. In all, the peeling algorithm with
$T_{c,15}$ works in a satisfying way, independently of the shape parameter $u$.

\subsection{Denoising: real and pseudo-real signals}

A last point should be reminded: peeling algorithms were mainly developed for biomedical
applications \cite{CW,HP}. Therefore, we have chosen to evaluate the performance of the newly
developed versions on  biological signals (normal electrocardiogram -- ECG and normal background
electroencephalogram EEG). However, when dealing with real signals for denoising, one is faced
with the following problem: it is impossible to assert that the denoising is accurate when the
original signal is unknown. It is also very hard to be provided with a non noisy signal which can
be perturbed artificially.

\smallskip

In order to cope with this situation, we have chosen to work with a commonly used ECG simulator,
implemented in Matlab. In this way, one can produce a clean ECG type signal, spoil it with an
artificial noise, and then try to recover the original signal by some denoising procedures. The
simulated EEG was generated according to the procedure described in \cite{Rankine07} (see the url
for the Matlab code).

\smallskip

This is the protocol we have followed for our experiment. Numerical results globally confirm those
obtained on the benchmark signals, both for the simulated ECG and EEG. They are not reproduced
here for sake of conciseness, but an illustrative example is given at Figure \ref{fig:ECG}.
\begin{figure}
\includegraphics[scale=0.75]{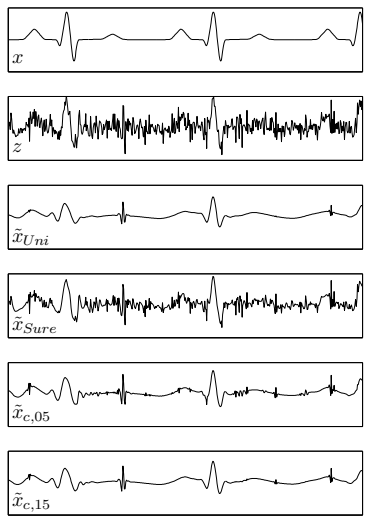}
\caption{Denoising example for a simulated $ECG$: $x$ original signal,
$z$ noisy signal (Laplacian noise $u_n$=1, $SNR$=0dB), $\hat{x}_{Uni}$ \textit{Universal} thresholding ($SNR_{den}$=4.8dB),
$\hat{x}_{Sure}$ \textit{Sure} thresholding ($SNR_{den}$=3.7dB),
$\hat{x}_{c,05}$ $T_{c,05}$ thresholding ($SNR_{den}$=5.9dB) and
$\hat{x}_{c,15}$ $T_{c,15}$ thresholding ($SNR_{den}$=5.3dB).
\label{fig:ECG}}
\end{figure}

Two-dimensional versions of the tested algorithms were applied on real benchmark images also
($Lena$, $House$, $Barbara$, $Peppers$), with similar performances to those obtained for the 1-D
signals. Therefore, the detailed results are not presented here.


\end{document}